\def\section{\@startsection{section}{1}%
  \z@{1.1\linespacing\@plus\linespacing}{.8\linespacing}%
  {\normalfont\Large\scshape\centering}}
\theoremstyle{plain}
\newtheorem*{conj*}{Root Groups Conjecture}
\newtheorem*{thm1.2}{(1.2) Theorem}
\newtheorem*{thm1.3}{(1.3) Theorem}
\newtheorem*{thm1.4}{(1.4) Theorem}
\newtheorem*{prop*}{Proposition}
\newtheorem{prop}{Proposition}[section]
\newtheorem{thm}[prop]{Theorem}
\newtheorem{lemma}[prop]{Lemma}
\theoremstyle{definition}
\newtheorem{Def}[prop]{Definition}
\newtheorem{hypothesis}[prop]{Hypothesis}
\newtheorem*{Def*}{Definition}
\newtheorem*{notation*}{Notation}
\newtheorem{remark}[prop]{Remark}
\newtheorem{remarks}[prop]{Remarks}
\newcommand{\cala}{\mathcal{A}}
\newcommand{\calg}{\mathcal{G}}
\newcommand{\zz}{\mathbb{Z}}
\newcommand{\ga}{\alpha}
\newcommand{\gb}{\beta}
\newcommand{\gc}{\gamma}
\newcommand{\gd}{\delta}
\newcommand{\gre}{\epsilon}
\newcommand{\gl}{\lambda}
\newcommand{\gr}{\rho}
\newcommand{\charc}{{\rm char}}
\newcommand{\sminus}{\smallsetminus}
\newcommand{\lan}{\langle}
\newcommand{\ran}{\rangle}
\numberwithin{equation}{section}
\begin{document}
\title[A non-split sharply $2$-transitive group]{A sharply $2$-transitive group without a non-trivial abelian normal subgroup}
\author[Eliyahu Rips, Yoav Segev, Katrin Tent]{Eliyahu Rips$^1$\qquad Yoav Segev\qquad Katrin Tent}

\address{Eliyahu Rips\\
       Einstein Institute of Mathematics\\
        Hebrew University \\
        Jerusalem 91904\\
        Israel}
\email{eliyahu.rips@mail.huji.ac.il}
\thanks{$^1$This research was partially supported by the Israel Science Foundation}

\address{Yoav Segev \\
         Department of Mathematics \\
         Ben-Gurion University \\
         Beer-Sheva 84105 \\
         Israel}
\email{yoavs@math.bgu.ac.il}

\address{Katrin Tent \\
         Mathematisches Institut \\
         Universit\"at M\"unster \\
	 Einsteinstrasse 62\\
         48149 M\"unster \\
         Germany}
\email{tent@wwu.de}

\keywords{sharply $2$-transitive, free product, HNN extension, malnormal}
\subjclass[2010]{Primary: 20B22}

\begin{abstract} 
We show that any group $G$ is contained in some sharply 2-transitive 
group $\calg$ without a non-trivial
abelian normal subgroup.
This answers a long-standing open question.  The involutions
in the groups $\calg$  that we construct have no fixed points.
\end{abstract}

\date{\today}
\maketitle
 
\section{Introduction}

The \emph{finite} sharply $2$-transitive groups were classified by Zassenhaus
in 1936 \cite{Z} and it is known that any finite
sharply $2$-transitive group contains a non-trivial abelian normal subgroup.
 
In the infinite situation no classification is known (see \cite[Problem 11.52, p.~52]{MK}).
It was a long standing open problem
whether every infinite sharply $2$-transitive group contains a non-trivial abelian normal subgroup.
In \cite{Ti} 
Tits proved that this holds for locally compact connected sharply $2$-transitive groups.
Several other papers showed that under certain special conditions the assertion holds
(\cite{BN, GMS, GlGu, M, T2, Tu, W}). 
The reader may wish to consult Appendix \ref{app A} for more detail,
and for a description of our main results using permutation group theoretic language.

An equivalent formulation to the above problem is 
whether every near-domain is a near-field (see \cite{Hall, K, SSS} and  
Appendix \ref{app A} below).     

We here show that this is not the case.  We construct a sharply $2$-transitive infinite group
without a non-trivial abelian normal subgroup.
In fact, the construction is similar in flavor to the free completion of partial generalized polgyons \cite{T1}.

We are grateful to Joshua Wiscons for pointing out an instructive counterexample
to a first version of this paper, and for greatly simplifying parts of the proof 
in a later version.  We are also grateful to Avinoam Mann for greatly simplifying 
the proof of Proposition \ref{prop A1 fp} and for drawing our attention to a point in the
proof that needed correction. We thank all the referees 
of this paper for carefully reading the manuscript
and making very useful remarks that helped to improve the exposition.

Recall that a proper subgroup $A$ of a group
$G$ is {\it malnormal} in $G$ if\linebreak
$A\cap g^{-1}Ag=1,$ for all $g\in G\sminus A$.

\begin{thm}\label{thm main}
Let $G$ be a group with a malnormal subgroup $A$ and an involution $t\in G\sminus A$ such that
$A$ contains no involutions.
Then for any two elements $u, v\in G$ with $Au\ne Av$ there exist
\begin{itemize}
\item[(a)]
an extension $G\le G_1$;

\item[(b)]
a malnormal subgroup $A_1$ of $G_1$ such that $A_1$ does not contain involutions and satisfies $A_1\cap G=A$;

\item[(c)] an element $f\in G_1$ such that $A_1f=A_1u$ and $A_1tf=A_1v.$
\end{itemize}
\end{thm}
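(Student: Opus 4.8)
The plan is to realize $f$ by the simplest extension consistent with (c): a free product of $G$ with an infinite cyclic group. Set $G_1:=G*\lan f\ran$, where $\lan f\ran\cong\zz$ is infinite cyclic on a new generator $f$; then (a) holds and $G$ embeds as a free factor. Put $\alpha:=fu^{-1}$ and $\beta:=tfv^{-1}$, and define $A_1:=\lan A,\alpha,\beta\ran$. With these choices (c) is immediate: since $\alpha=fu^{-1}\in A_1$ we get $A_1f=A_1\alpha u=A_1u$, and since $\beta=tfv^{-1}\in A_1$ (so $tf=\beta v$) we get $A_1tf=A_1\beta v=A_1v$. As a consistency check reflecting the sharply $2$-transitive intent, note that $A_1u=A_1v$ would give $uv^{-1}\in A_1\cap G$, which once (b) is established equals $A$, contradicting $Au\ne Av$. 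Thus everything reduces to proving (b): that $A_1$ has no involutions, that $A_1\cap G=A$, and that $A_1$ is malnormal in $G_1$ (properness of $A_1$ being automatic, since $A_1\cap G=A\subsetneq G$).

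Write $s:=uv^{-1}$; the hypothesis $Au\ne Av$ gives $s\notin A$, and $t\in G\sminus A$ is an involution, so that $\beta=t\alpha s$ with both flanking factors $t$ and $s$ lying outside $A$. The first step is to pin down the structure of $A_1$ inside $G*\lan f\ran$. Each generator $\alpha=fu^{-1}$ and $\beta=tfv^{-1}$ involves the free letter $f$ with exponent exactly $+1$, flanked by elements of $G$. Using normal forms in the free product (equivalently, Bass--Serre theory for the associated tree), I would show that $A_1$ decomposes as a free product whose factors are $A$ and one or two infinite cyclic groups. Granting this, $A_1$ has no involutions, since in a free product every torsion element is conjugate into a factor and none of the factors has an involution; and a normal-form computation gives $A_1\cap G=A$, because any element of $A_1$ lying in $G$ must have total $f$-exponent $0$, forcing all occurrences of $f$ to cancel, which cannot happen since the $G$-syllables separating consecutive $f$-letters are built from $A\cup\{t,s,u,v\}$ and do not collapse.

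The main work, and the step I expect to be the chief obstacle, is malnormality: $A_1\cap g^{-1}A_1g=1$ for every $g\in G_1\sminus A_1$. I would argue with normal forms, taking a nontrivial $x\in A_1\cap g^{-1}A_1g$, writing it both as a reduced word in $\alpha,\beta,A$ and as $g^{-1}(\text{such a word})g$, and tracking the $f$-letters through the cancellation. The delicate point is that consecutive $f$-letters arising from products of generators are separated by $G$-syllables such as $u^{-1}t$, $v^{-1}t$, $u^{-1}$, $s$, together with elements of $A$; excluding the cancellation of an $f$ against an $f^{-1}$ amounts to showing that these $G$-syllables never lie in $A$ and never conjugate an $A$-part back into $A$. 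This is exactly where the three hypotheses are consumed: malnormality of $A$ in $G$, the fact that $t\notin A$ is an involution, and $s\notin A$.

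The genuinely delicate situations are the degenerate alignments in which a separating $G$-syllable collapses---most notably $s=t$ (that is, $uv^{-1}=t$), where $\beta=t\alpha t=\alpha^{t}$ and two $f$-letters can be brought within a controlled distance---and more generally coincidences such as $t\in As$ or relations among $u,v,t$ modulo $A$. These are precisely the configurations that can break a careless construction (cf.\ the counterexample acknowledged in the introduction), and each must be checked by hand, using malnormality of $A$ to confirm that no unexpected element falls into $A_1\cap g^{-1}A_1g$ or into $A_1\cap G$ beyond $A$. I expect the bulk of the proof to be this case analysis establishing malnormality, with the no-involutions and intersection claims following comparatively quickly from the free-product structure of $A_1$.
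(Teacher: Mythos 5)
Your construction is exactly the paper's construction in one of its two cases, but it cannot work uniformly: there is a configuration of $u,v$ for which the subgroup $A_1=\lan A,\ fu^{-1},\ tfv^{-1}\ran$ of $G*\lan f\ran$ is provably \emph{not} malnormal, so no amount of case-checking will close the argument. Normalize $u=1$ (replace $f$ by $fu^{-1}$). By Lemma \ref{lem hyp}(3), the double coset $AvA$ contains an involution if and only if $v^{-1}\in AvA$, and in that case one may replace $v$ by an involution in $AvA$. Then $\alpha=f$, $\beta=tfv^{-1}=tfv$, and
\[
t\,(\alpha\beta^{-1})\,t \;=\; t\,(fvf^{-1}t)\,t \;=\; tfvf^{-1} \;=\; \beta\alpha^{-1}.
\]
Here $\alpha\beta^{-1}=fvf^{-1}t$ and $\beta\alpha^{-1}=tfvf^{-1}$ are nontrivial elements of $A_1$ (their normal forms in $G*\lan f\ran$ are reduced, since $t,v\ne 1$), while $t\notin A_1$: the free-product decomposition $A_1=A*\lan\alpha\ran*\lan\beta\ran$ and the equality $A_1\cap G=A$ still hold in this case (only $t,v\notin A$ and $v\notin AtA$ are needed for that, and $v\in AtA$ is excluded because then $f$ would already exist in $G$), and $t\notin A$. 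So $t$ conjugates a nontrivial element of $A_1$ into $A_1$ and malnormality fails. This is not one of the ``delicate alignments to be checked by hand'' that you flag ($s=t$, $t\in As$, and so on); it is an unconditional obstruction to the free-product construction whenever $A(vu^{-1})A$ contains an involution.

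Accordingly, the paper first reduces (Remark \ref{rem pf thm 1.1}, Hypothesis \ref{hyp main}) to $u=1$, $v,v^{-1}\notin AtA$, and either $v^{-1}\notin AvA$ or $v$ an involution, and then splits into two cases. In the first case your construction is precisely what is used ($G_1=G*\lan f_1\ran$, $A_1=\lan A,f_1,tf_1v^{-1}\ran$), and the hypothesis $v^{-1}\notin AvA$ is consumed at exactly the step your configuration above would break (Case (iib) of Proposition \ref{prop malnor nonhnn}). In the second case a different extension is required: the HNN extension $G_1=\lan G,f\mid f^{-1}tf=v\ran$ with $A_1=\lan A,f\ran$, for which $A_1f=A_1$ and $A_1tf=A_1fv=A_1v$, with $A_1=A*\lan f\ran$ and malnormality verified via Britton's lemma (this is the one place where the absence of involutions in $A$ is used). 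Your proposal is missing this second construction entirely, and without it the theorem is not proved.
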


\begin{remark}
It is easy to see (see \S2) that in Theorem \ref{thm main}
we may assume that $u=1, v\notin AtA$  and that either: 
(1)\ $v^{-1}\notin AvA$ or: (2)\ $v$ is an involution.
If case (1) holds we take $G_1=G*\lan f\ran$
to be the free product of $G$ with an infinite cyclic group generated by $f,$
and $A_1=\lan A, f, tfv^{-1}\ran$.  If case (2) holds we take
$G_1=\lan G, f\mid f^{-1}tf=s\ran$ and HNN extension and $A_1=\lan A, f\ran$.
\end{remark}

As a corollary to Theorem \ref{thm main} we get the following.

\begin{thm}\label{thm s2t in char 2}
Let $G$ be a group with a malnormal subgroup $A$ such that
$A$ contains no involutions.
Assume further that $G$ is \texttt{not} sharply $2$-transitive on the set of right cosets $A\backslash G$.
Then $G$ is contained in a group $\calg$ having a malnormal subgroup $\cala$ such that
\begin{enumerate}
\item
$\cala\cap G=A;$

\item
$\calg$ is sharply $2$-transitive on the set of right cosets $X:=\cala\backslash\calg;$

\item
$\cala$ contains no involutions (i.e.~$\calg$ is of permutational characteristic $2$);

\item
$\calg$ does not contain a non-trivial abelian normal subgroup;

\item
if $G$ is infinite then $G$ and $\calg$ have the same cardinality (similarly 
for $X$ and $A\backslash G$).
\end{enumerate}
\end{thm}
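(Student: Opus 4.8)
The plan is to realize $\calg$ as a transfinite \emph{completion} of $G$, forcing sharp $2$-transitivity one pair of cosets at a time by repeatedly invoking Theorem~\ref{thm main}. Since that theorem needs an involution outside the malnormal subgroup, I first pass to $G^\ast:=G\ast\lan t\ran$, the free product with a group of order $2$. A normal-form argument shows that $A$ remains malnormal in $G^\ast$, still contains no involutions, and that $t\notin A$ and $A\cap G=A$; I keep this fixed involution $t$ throughout and take $(\cala,\cala t)$ as the base pair of the eventual action (the hypothesis that $G$ is not already sharply $2$-transitive guarantees that the completion is a genuine, non-degenerate extension). For part (5) I record the standard fact that for infinite $G$ one has $|A\backslash G|=|G|$: each non-trivial double coset $AxA$ ($x\notin A$) is a union of $[A:A\cap x^{-1}Ax]=[A:1]=|A|$ right cosets by malnormality, so (as $A$ is proper) $|A\backslash G|\ge|A|$, whence $|A\backslash G|=|A|\cdot|A\backslash G|=|G|$. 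The same holds for $\calg$, and since $g\mapsto(\cala g,\cala tg)$ is a bijection of $\calg$ onto the ordered pairs of distinct points one has $|\calg|=|X|$; thus $|X|=|\calg|=|G|=|A\backslash G|$ will follow once $|\calg|=|G|$.

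Next I would build an increasing chain $G^\ast=G_0\le G_1\le\cdots\le G_\ga\le\cdots$ together with malnormal subgroups $A_\ga\le G_\ga$, none containing involutions, satisfying $A_\gb\cap G_\ga=A_\ga$ for $\ga\le\gb$ and $t\notin A_\ga$. At a successor stage I pick an as yet unrealized ordered pair of distinct cosets $(A_\ga u,A_\ga v)$ --- meaning no $g\in G_\ga$ has $A_\ga g=A_\ga u$ and $A_\ga t g=A_\ga v$ --- and apply Theorem~\ref{thm main} to get $G_{\ga+1}\ge G_\ga$, a malnormal $A_{\ga+1}$ without involutions, $A_{\ga+1}\cap G_\ga=A_\ga$, and $f$ with $A_{\ga+1}f=A_{\ga+1}u$ and $A_{\ga+1}tf=A_{\ga+1}v$; at limits I take unions. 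Each $G_\ga$ has cardinality $\le|G|$ (each step adjoins finitely many generators), the supply of pairs at any stage is $\le|G|$, and a dovetailing enumeration of length $|G|$ lets me eventually treat every pair that ever appears, giving $\calg=\bigcup_\ga G_\ga$, $\cala=\bigcup_\ga A_\ga$ with $|\calg|=|G|$. Every required property passes to the union: an element of $\cala$ lies in some $A_\ga$, so $\cala$ has no involutions and $\cala\cap G=A$ (giving (1), (3)); if $g\in\calg\sminus\cala$ and $a,g^{-1}ag\in\cala$, all lie in some $G_\ga$ with $g\notin A_\ga$, so malnormality of $A_\ga$ forces $a=1$ and $\cala$ is malnormal. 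For (2), malnormality makes each two-point stabilizer $x^{-1}\cala x\cap y^{-1}\cala y$ trivial, so the action on $X=\cala\backslash\calg$ is sharp; and a realized relation $A_{\ga+1}f=A_{\ga+1}u$ gives $fu^{-1}\in\cala$, hence $\cala f=\cala u$, so realizations persist and the base pair maps to every ordered pair of distinct points, which is $2$-transitivity.

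The real content is (4). Suppose $1\ne N\nsg\calg$ is abelian. A $2$-transitive group is primitive, so the non-trivial normal subgroup $N$ is transitive; being transitive and abelian it is regular, so $\calg=N\rtimes\cala$ with the decomposition $g=na$ ($n\in N$, $a\in\cala$) unique. Let $s$ be any involution and write $s=na$; then $1=s^2=(n\,{}^{a}n)\,a^2$ with $n\,{}^a n\in N$, and uniqueness forces $a^2=1$. Since $\cala$ contains no involutions, $a=1$, so $s=n\in N$. Thus \emph{every} involution of $\calg$ lies in the abelian group $N$, and so any two involutions commute. But this fails: already in $G^\ast=G\ast\lan t\ran$, choosing $g\in G\sminus A$ (possible as $A$ is proper), the word $[\,t,t^{g}\,]=t\,g^{-1}tg\,t\,g^{-1}tg$ is reduced and non-trivial, so the involutions $t$ and $t^{g}$ do not commute; as $G^\ast$ embeds in $\calg$, they remain non-commuting involutions there. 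This contradiction proves (4).

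The main obstacle is exactly this last step: everything through (3) and (5) is the (standard but delicate) transfinite bookkeeping of a free completion, whereas excluding an abelian normal subgroup is the whole point. The key idea is that the freeness built into the construction --- visible already in the initial free product --- supplies non-commuting involutions, and these obstruct the rigid $N\rtimes\cala$ structure any abelian normal subgroup would impose. One must also check that this non-commutation, a statement about a finite subword, survives every successor and limit stage; this is automatic since each stage of the construction is an embedding.
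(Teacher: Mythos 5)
Your proposal is correct and follows essentially the same route as the paper: transfinitely iterate Theorem \ref{thm main} to realize every ordered pair of distinct cosets, check that malnormality, the absence of involutions, and the coherence $A_\beta\cap G_\alpha=A_\alpha$ all pass to unions of chains, and rule out an abelian normal subgroup by exhibiting two non-commuting involutions. The one substantive divergence is in part (4): the paper adjoins the free factor $\langle t\rangle$ only when $G\sminus A$ contains no involution, and therefore has to extract non-commuting involutions from the first application of Theorem \ref{thm main}, case-splitting between the free-product step and the HNN step (where it needs a second involution $s\ne t$ and a canonical-form argument); by always starting from $G*\langle t\rangle$ you get the non-commuting pair $t,\,t^g$ immediately from the normal form in the free product, and you also derive the ``abelian normal subgroup forces all involutions to commute'' step from the regular normal subgroup structure rather than citing \cite{GMS}. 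This is precisely the simplification the paper itself credits to Uri Bader in a footnote. The only place I would tighten the write-up is the transfinite bookkeeping: the paper uses $\omega$ rounds, each a transfinite recursion over the pairs present at the start of that round, which handles finite $G$ and the continual appearance of new pairs more transparently than a single dovetailed chain of length $|G|$; but this is organizational, not a gap.
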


As an immediate consequence of Theorem \ref{thm s2t in char 2} we have

\begin{thm}\label{thm eg}
Any group $G$ is contained in a group $\calg$ acting sharply $2$-transitively
on a set $X$ such that each involution in $\calg$ has no fixed point in $X,$ and such that
$\calg$ does not contain a non-trivial abelian normal subgroup.
\end{thm}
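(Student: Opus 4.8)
The plan is to obtain Theorem \ref{thm eg} directly from Theorem \ref{thm s2t in char 2}, so the only genuine task is to embed an arbitrary group $G$ into a group $H$ that carries an involution-free malnormal subgroup $A$ and that is \emph{not} already sharply $2$-transitive on $A\backslash H$. Once such a pair $(H,A)$ is in hand, Theorem \ref{thm s2t in char 2} supplies the desired $\calg$, and it remains only to reinterpret its permutational characteristic $2$ condition as fixed-point-freeness of involutions.

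To build $(H,A)$, I would take the free product $H=G*\lan a\ran*\lan b\ran$, where $\lan a\ran$ and $\lan b\ran$ are infinite cyclic, and set $A=\lan a\ran\cong\zz$. The extra factor $\lan b\ran$ serves only to keep $A$ a proper subgroup in the degenerate case $G=1$. Now $A$ is a free factor of $H$, hence malnormal (the standard normal-form/syllable-length argument shows $A\cap g^{-1}Ag=1$ for every $g\in H\sminus A$), and being infinite cyclic it contains no involutions. Moreover $H$ is not even $2$-transitive on $A\backslash H$: the orbits of the point stabilizer $A$ on $A\backslash H$ are exactly the double cosets in $A\backslash H/A$, so $2$-transitivity would force $|A\backslash H/A|=2$, whereas $A$, $AbA$, $Ab^2A,\dots$ are pairwise distinct by uniqueness of the normal form in a free product. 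In particular $H$ is not sharply $2$-transitive on $A\backslash H$, and all hypotheses of Theorem \ref{thm s2t in char 2} are met for $(H,A)$.

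Applying Theorem \ref{thm s2t in char 2} to $(H,A)$ yields a group $\calg\supseteq H\supseteq G$ together with a malnormal subgroup $\cala$ with $\cala\cap H=A$, acting sharply $2$-transitively on $X:=\cala\backslash\calg$, with $\cala$ free of involutions and with no non-trivial abelian normal subgroup. To finish I would translate the involution-freeness of $\cala$ into the fixed-point statement: an element $g\in\calg$ fixes the point $\cala h\in X$ precisely when $\cala hg=\cala h$, i.e.\ when $hgh^{-1}\in\cala$, so $g$ has a fixed point in $X$ iff $g$ is conjugate into $\cala$. Since conjugation preserves order, $\cala$ contains no involution iff no involution of $\calg$ is conjugate into $\cala$, iff every involution of $\calg$ acts on $X$ without fixed points. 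Combined with the sharp $2$-transitivity, the absence of a non-trivial abelian normal subgroup, and the inclusion $G\le\calg$, this gives exactly the assertion of Theorem \ref{thm eg}.

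The main obstacle is entirely absorbed into the quoted Theorem \ref{thm s2t in char 2}; granting it, the remaining ingredients—malnormality of a free factor, the count of double cosets ruling out $2$-transitivity, and the order-preserving conjugacy computation identifying fixed points—are routine.
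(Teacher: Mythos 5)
Your proposal is correct and follows essentially the same route as the paper: Theorem \ref{thm eg} is deduced as an immediate corollary of Theorem \ref{thm s2t in char 2}, the only content being the choice of a seed pair consisting of a group containing $G$ together with an involution-free malnormal subgroup over which the coset action is not sharply $2$-transitive. The paper simply takes $A=1$ (handling $|G|\le 2$ separately, since the regular action of a group of order $\le 2$ is already sharply $2$-transitive or degenerate), whereas your seed $H=G*\lan a\ran*\lan b\ran$ with $A=\lan a\ran$ works uniformly for all $G$; the paper itself notes such free-product variants right after its proof, and your supporting details (malnormality of a free factor, the double-coset count, and the identification of fixed points with conjugates into $\cala$) are all sound.
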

\begin{proof}
For $|G|=1,2$ this is obvious.  Otherwise take $A=1$ in Theorem \ref{thm s2t in char 2}.  
\end{proof}
\noindent
In fact there are many other ways to obtain a group $\calg$ having a malnormal subgroup 
$\cala$ and satisfying (2)--(4) of Theorem \ref{thm s2t in char 2},
e.g., take $G=\lan t\ran* A,$ where $t$ is an involution,
and $A$ a non-trivial group without involutions, and apply Theorem \ref{thm s2t in char 2}.
(Here the free product guarantees that $A$ is malnormal in $G$.)

Theorem \ref{thm eg} shows that there exists a sharply $2$-transitive
group $\calg$ of {\it characteristic $2$} (see Definition \ref{def char} in Appendix \ref{app A})
such that $\calg$ does not contain a non-trivial abelian normal subgroup.  Further
as noted in Appendix \ref{app A}, if $G$ is sharply $2$-transitive of characteristic $3,$
then $G$ contains a non-trivial abelian normal subgroup.  The cases where $\charc(G)$ is distinct from
$2$ and $3$ remain open.

Finally we mention that the hypothesis that $A$ does not contain involutions
in Theorem \ref{thm main} is used only in the case where we take $G_1$ to be
an HNN extension of $G$, and then, it is used only
in the proof of the malnormality of $A_1$ in $G_1$.

\section{Some preliminaries regarding Theorem \ref{thm main}}\label{sect explanation}

The following observations and remarks are here in order to explain 
to the reader the way we intend to prove Theorem \ref{thm main}, and
to explain the main division between the two cases  we deal with in \S\ref{sect nonhnn} and \S\ref{sect hnn}.

In fact Lemma \ref{lem hyp}(3) and Lemma \ref{lem fuv} below, together with Remark \ref{rem pf thm 1.1}, show that we may
assume throughout this paper that hypothesis \ref{hyp main} holds; and that hypothesis naturally
leads to the division of the two cases dealt with in \S\ref{sect nonhnn} and \S\ref{sect hnn}.

\begin{lemma}\label{lem hyp}
Let $A$ be a malnormal subgroup of a group $G$ and let $g\in G\sminus A$. Then
\begin{enumerate}
\item
$C_G(a)\le A,$ for all $a\in A,\ a\ne 1;$ 

\item
$\lan g\ran\cap A=1;$

\item
$AgA$ contains an involution iff $g^{-1}\in AgA.$
\end{enumerate}
\end{lemma}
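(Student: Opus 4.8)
The plan is to prove the three parts in sequence, using (1) to get (2) cheaply, and saving the full strength of malnormality for the converse half of (3), which is the only subtle point.

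For (1) I would argue by contradiction. Fix $a\in A\sminus\{1\}$ and $x\in C_G(a)$, and suppose $x\notin A$. Then $x^{-1}ax=a\in A$, so $a$ lies in $A\cap x^{-1}Ax$; but malnormality applied to $x\in G\sminus A$ gives $A\cap x^{-1}Ax=1$, whence $a=1$, a contradiction. Hence $C_G(a)\le A$. Part (2) is then immediate: if some nontrivial power $g^n$ lay in $A$, then $g\in C_G(g^n)\le A$ by (1), against $g\notin A$; hence $\lan g\ran\cap A=1$.

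For (3) I would first reformulate the condition ``$AgA$ contains an involution''. Any element of $AgA$ has the form $xgy$ with $x,y\in A$, and it is automatically nontrivial because $g\notin A$; it is an involution exactly when $xgy=(xgy)^{-1}=y^{-1}g^{-1}x^{-1}$, and multiplying this on the left by $y$ and on the right by $x$ rewrites it as $(yx)g(yx)=g^{-1}$. Thus $AgA$ contains an involution if and only if there is some $w\in A$ with $wgw=g^{-1}$, and in that case $wg$ is such an involution, since $(wg)^2=w(gw)g=w(w^{-1}g^{-1})g=1$. In particular, if $AgA$ contains an involution then $g^{-1}=wgw\in AgA$, which is the forward implication.

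The remaining, and only delicate, direction is the converse of (3). Here I would start from $g^{-1}=agb$ with $a,b\in A$, invert to get $g=b^{-1}g^{-1}a^{-1}$, and substitute the first relation into the second to obtain $g=(b^{-1}a)\,g\,(ba^{-1})$. Rearranging gives $g(ba^{-1})g^{-1}=a^{-1}b\in A$, so $ba^{-1}\in A\cap g^{-1}Ag$; since $g\notin A$, malnormality forces $ba^{-1}=1$, i.e.\ $a=b$. Then $g^{-1}=aga$, so $w=a$ satisfies $wgw=g^{-1}$ and $ag$ is the required involution in $AgA$. The main obstacle is precisely this step: manufacturing a \emph{symmetric} expression $g^{-1}=aga$ out of the bare membership $g^{-1}\in AgA$, which is exactly where malnormality (rather than just the weaker facts used in (1)--(2)) is needed.
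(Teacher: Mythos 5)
Your proof is correct and follows essentially the same route as the paper's: parts (1) and (2) are identical, and in (3) you exhibit the involution $ag$ just as the paper does. The only difference is how the converse of (3) is closed: the paper computes $(ag)^2=ab^{-1}\in A$ and invokes part (2) to force $(ag)^2=1$, whereas you apply malnormality directly to get $ba^{-1}\in A\cap g^{-1}Ag=1$ and hence $a=b$; both are valid one-line finishes resting on the same underlying fact.
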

\begin{proof}
(1): Let $a\in A$ with $a\ne 1,$ and let $h\in C_G(a)$. Then $a\in A\cap A^h$. 
So $h\in A,$ since $A$ is malnormal in $G$.
\medskip

\noindent
(2):  Since $g\in C_G(g^k)$ for all integers $k,$  part (2) follows from (1).
\medskip

\noindent
(3):\quad
If $g^{-1}\notin AgA,$ then clearly $AgA$ does not contain an involution.
Conversely, assume that $g^{-1}\in AgA$.  Then $g^{-1}=agb,$ for some $a, b\in A,$  
so $(ag)^2=ab^{-1}\in A$.  Then, by (2), either $(ag)^2=1$ or $ag\in A$.  But $g\notin A,$
so $ag\notin A,$ and we have $(ag)^2=1$.  Hence $AgA$ contains the involution $ag$.
\end{proof}
 
We now make the following observation (and introduce the following notation):
 
\begin{lemma}\label{lem fuv}
Let $G$ be a group with a malnormal subgroup $A$ and an involution $t\in G\sminus A$.
Let $G_1$ be an extension of $G,$ such that
$G_1$ contains a malnormal subgroup $A_1$ with $A_1\cap G=A$. Let $r, s\in G$ be such that $Ar\ne As$. Then
\begin{itemize}
\item[(1)]
there is at most one element $f'\in G_1$ with $A_1f'=A_1r$ and $A_1tf'=A_1s$, which
we denote by $f'=f_{r,s}$ (if it exists).
\end{itemize}
The convention in $(2)$--$(4)$ below is that the left side exists if and only if the right side does
and then they are equal:
\begin{itemize}
\item[(2)]
$f_{r,s}g=f_{rg, sg}$ for any $g\in G.$

\item[(3)]
$tf_{r,s}=f_{s,r}.$

\item[(4)]
$f_{a_1r, a_2s}=f_{r,s}$ for all $a_1, a_2\in A$.
\end{itemize}  
\end{lemma}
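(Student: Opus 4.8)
The plan is to derive all four statements from the uniqueness assertion (1), which is the only part that requires genuine input; parts (2)--(4) will then be formal consequences, obtained by exhibiting an element of $G_1$ that satisfies the defining coset equations of the relevant symbol and invoking (1).

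For (1), I would suppose $f', f''\in G_1$ both satisfy $A_1 f' = A_1 r = A_1 f''$ and $A_1 tf' = A_1 s = A_1 tf''$, and set $x = f'(f'')^{-1}$. The first pair of equalities gives $x\in A_1$, while the second gives $txt^{-1}\in A_1$, i.e.\ $x\in t^{-1}A_1 t$. The crux is the observation that $t\notin A_1$: since $t\in G\sminus A$ and $A_1\cap G = A$, membership $t\in A_1$ would force $t\in A_1\cap G = A$, a contradiction. Hence malnormality of $A_1$ in $G_1$, applied with $g=t$, yields $A_1\cap t^{-1}A_1 t = 1$, so $x=1$ and $f'=f''$. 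I expect this to be the main (indeed the only) obstacle, but it is short once the non-membership $t\notin A_1$ is recorded.

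For the remaining parts I would, for each claimed identity, produce an element of $G_1$ satisfying the defining equations of the right-hand symbol and then quote (1). Writing $f=f_{r,s}$, so that $A_1 f = A_1 r$ and $A_1 tf = A_1 s$: for (2), the element $fg$ satisfies $A_1(fg) = A_1 rg$ and $A_1 t(fg) = A_1 sg$, whence $fg = f_{rg,sg}$; for (3), the element $tf$ satisfies $A_1(tf)=A_1 s$ and, using $t^2=1$, $A_1 t(tf)=A_1 f = A_1 r$, whence $tf = f_{s,r}$; for (4), the equalities $A_1 a_1 r = A_1 r$ and $A_1 a_2 s = A_1 s$ (valid because $a_1,a_2\in A\subseteq A_1$) show that the defining equations for $f_{a_1 r, a_2 s}$ coincide verbatim with those for $f_{r,s}$.

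In each case the stated ``left side exists iff right side does'' biconditional is handled by running the same argument backwards --- multiplying by $g^{-1}$ in (2), by $t$ in (3), and observing that the two systems of equations are literally identical in (4) --- so no separate existence argument is needed. Throughout, I must also check that the non-degeneracy hypothesis $Ar\ne As$ is inherited by each new pair so that the notation $f_{\cdot,\cdot}$ stays meaningful; this is immediate, since $Arg\ne Asg$, $As\ne Ar$, and $Aa_1 r\ne A a_2 s$ all reduce to the single condition $rs^{-1}\notin A$.
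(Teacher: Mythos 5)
Your proposal is correct and follows essentially the same route as the paper: part (1) is proved by noting $f'(f'')^{-1}\in A_1\cap t^{-1}A_1t$, that $t\notin A_1$ (because $A_1\cap G=A$ and $t\in G\sminus A$), and then invoking malnormality of $A_1$; parts (2)--(4) are then read off from the defining coset equations exactly as in the paper. Your extra remarks on the existence biconditional and on the condition $Ar\ne As$ being preserved are correct and only make explicit what the paper leaves implicit.
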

\begin{proof}
(1):\quad
Let $f_1, f_2\in G_1$ such that $A_1f_1=A_1f_2=A_1r$ and $A_1tf_1=A_1tf_2=A_1s$.  Then $f_1f_2^{-1}\in A_1$ 
and $tf_1f_2^{-1}t\in A_1$.  Since $t\in G_1\sminus A_1,$ and  since
$A_1$ is malnormal in $G_1,$  we obtain that $f_1f_2^{-1}=1,$ so $f_1=f_2$.
\medskip

\noindent
(2):\quad 
$A_1f_{r,s}g=A_1rg=A_1f_{rg, sg}$ and $A_1tf_{r,s}g=A_1sg=A_1f_{rg,sg}$.
So, by (1), $f_{rg, sg}=f_{r,s}g$.
\medskip

\noindent
(3):\quad
$A_1tf_{r,s}=A_1s,$ and $A_1ttf_{r,s}=A_1f_{r,s}=A_1r$. So, by (1), $tf_{r,s}=f_{s,r}$.
\medskip

\noindent
(4):\quad
$A_1f_{a_1r, a_2s}=A_1a_1r=A_1r,$ and $A_1tf_{a_1r,a_2s}= A_1a_2s=A_1s$. So, by (1), $f_{a_1r, a_2s}=f_{r,s}$. 
\end{proof}

\begin{remark}\label{rem pf thm 1.1}
Let the notation be as in Theorem \ref{thm main}.  Notice that if there is an element 
$f\in G$ such that $Af=Au$ and $Atf=Av,$ we can just take  $G_1=G$
and $A_1=A$ and there is nothing to prove in Theorem \ref{thm main}.  

Hence we may assume throughout this paper that this is not the case. 
In view of (2) and (4)
of Lemma \ref{lem fuv}, $f_{u,v}=f_{1,vu^{-1}}u,$ and $f_{1,a'va}=f_{a^{-1},a'v}a=f_{1,v}a,$ for $a, a'\in A$.
Hence we may assume that $u=1$ (and hence $v\notin A$) and replace $v$ by any element of
the double coset $AvA$.  By Lemma \ref{lem hyp}(3), we may assume that either $v^{-1}\notin AvA,$
or $v$ is an involution.  Further, since $f_{1,t}=1$ and since $t$ is an involution, we may assume
that $v\notin AtA$ and $v^{-1}\notin AtA$.
\end{remark}

Hence it suffices to prove Theorem \ref{thm main} under the following
hypothesis which we assume for the rest of the paper.

\begin{hypothesis}\label{hyp main}
In the setting of Theorem \ref{thm main}, assume $u=1, v, v^{-1}\notin AtA$ and either $v^{-1}\notin AvA$ or $v$ is an involution.
\end{hypothesis}

\section{The case  $v^{-1}\notin AvA$}\label{sect nonhnn}

The purpose of this section is to prove Theorem \ref{thm main} of
the introduction  in the case where $v^{-1}\notin AvA$. We refer
the reader to Hypothesis \ref{hyp main} and to its explanation in \S\ref{sect explanation}.
Thus, throughout this section we assume that $v^{-1}\notin AvA$.  
Also, throughout
this section we use the notation and hypotheses of Theorem \ref{thm main}.

Let $\lan f_1\ran$ be an infinite cyclic group.
We let
\[
G_1=G*\lan f_1\ran,\quad f_2=tf_1v^{-1},\quad A_1=\lan A, f_1, f_2\ran.
\]

In this section we will prove the following theorem.

\begin{thm}\label{thm main nonhnn}
We have
\begin{enumerate}
\item
$A_1=A*\lan f_1\ran*\lan f_2\ran,$ with $f_1, f_2$ of infinite order;

\item
$A_1$ is malnormal in $G_1$.
\end{enumerate}
\end{thm}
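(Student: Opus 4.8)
The plan is to deduce the malnormality in (2) from a precise normal form for the elements of $A_1$ established while proving (1). For (1), I would substitute $f_2=tf_1v^{-1}$ and expand a reduced word of the abstract free product $A*\lan f_1\ran*\lan f_2\ran$ into $G_1=G*\lan f_1\ran$, then apply the normal form theorem for free products. For $m>0$ one has $f_2^m=t\,f_1\,(v^{-1}t)\,f_1\cdots(v^{-1}t)\,f_1\,v^{-1}$, and the inverse pattern $v\,f_1^{-1}\,(tv)\,f_1^{-1}\cdots f_1^{-1}\,t$ for $m<0$; thus every $\lan f_2\ran$-syllable contributes a chain of $f_1$-syllables separated by $v^{-1}t$ (resp.\ $tv$) and framed by $t,v^{-1}$ (resp.\ $v,t$). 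A reduced word therefore expands to an alternating product of $f_1$-syllables and $G$-syllables, and the only thing to verify is that no $G$-syllable collapses to $1$. The separators $v^{-1}t,tv$ are nontrivial because $v\ne t$ (as $t\in AtA$ while $v\notin AtA$); the frame elements are nontrivial; and where an $A$-syllable $a\ne1$ meets an adjacent frame one gets one of $v^{-1}a,\ at,\ ta,\ av$, or a triple product $v^{-1}at,\ v^{-1}av,\ tat,\ tav$, each $\ne1$ precisely because $v\notin A$, $t\notin A$, and $v,v^{-1}\notin AtA$ (so that $vt,tv^{-1}\notin A$). This gives injectivity, hence (1); and since $f_2^m$ with $m\ne0$ contains $f_1$-syllables, $f_1$ and $f_2$ have infinite order.

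This computation is exactly the tool needed for (2): written in $G_1$-normal form, an element of $A_1$ is an alternating product of $f_1$-syllables and $G$-syllables, where each $G$-syllable lies in one of finitely many double cosets relative to $A$ determined by $t,v$ (such as $A,\ tA,\ At,\ Av,\ v^{-1}A,\ tAt,\ tAv,\ v^{-1}At,\ v^{-1}Av$), and Hypothesis \ref{hyp main} together with $v\notin A$ makes these pairwise disjoint. Moreover the coset type of a $G$-syllable, together with the signs of the neighbouring $f_1$-exponents, records unambiguously how the syllable must be read as a piece of an $A_1$-word; I will call a local configuration \emph{admissible} when it so arises. In particular $A_1\cap G=A$, since an element of $A_1$ whose normal form has no $f_1$-syllable must be a single $A$-syllable.

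For (2) it suffices to prove that $g\in G_1$ and $1\ne x\in A_1$ with $g^{-1}xg\in A_1$ force $g\in A_1$, and I would induct on the number of $f_1$-syllables of $g$. If $g$ has none then $g\in G$; if moreover $x\in A_1\cap G=A$ then $g^{-1}xg\in A_1\cap G=A$, and malnormality of $A$ in $G$ (Lemma \ref{lem hyp}) gives $g\in A\le A_1$, while if $x$ has $f_1$-syllables then conjugation by the single $G$-syllable $g$ alters only the first and last $G$-syllables of $x$, so admissibility of the resulting end-cosets, by the disjointness above, again forces $g\in A$. If $g$ has $f_1$-syllables, I would read off the equation $xg=gy$ with $y:=g^{-1}xg\in A_1$: matching the outermost $f_1$-syllables of the two sides identifies an end of $g$ with the frame of an $\lan f_1\ran$- or $\lan f_2\ran$-syllable, so that $g=\alpha g'$ (or $g'\beta$) with $\alpha$ (resp.\ $\beta$) in $A_1$ and $g'$ having strictly fewer $f_1$-syllables; the inductive hypothesis then yields $g\in A_1$.

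The main obstacle is the cancellation analysis underlying this peeling step: I must show that at the interface between $g$ and the $t,v$-decorated frames of $x$ and $y$ no nontrivial collapse of $G$-syllables can occur except the one that exhibits an honest $A_1$-prefix (or suffix) of $g$. This is precisely where the hypotheses are indispensable: malnormality of $A$ in $G$, via Lemma \ref{lem hyp} (which gives $\lan g\ran\cap A=1$ and $C_G(a)\le A$), together with $v\notin A$ and the double-coset conditions $v,v^{-1}\notin AtA$ of Hypothesis \ref{hyp main}, are exactly the statements guaranteeing that the frame cosets meet neither $A$ nor one another; without them an unexpected shortening could allow $g$ to escape $A_1$. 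I expect the bulk of the work, and all the case distinctions, to be concentrated here.
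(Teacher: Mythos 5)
Your plan follows the paper's proof closely: part (1) is the same expansion-and-check argument as Proposition \ref{prop A1 fp} (your list of possible $G$-syllables and the reasons they are nontrivial coincides with the paper's), and your inductive peeling step for $g$ with $f_1$-syllables is exactly the paper's minimality argument in Case 4 of the proof of Proposition \ref{prop malnor nonhnn}.

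There is, however, a genuine gap in your base case, where $g\in G$ and $x\in A_1$ has $f_1$-syllables. You claim that since conjugation by $g$ alters only the first and last $G$-syllables of $x$, ``admissibility of the resulting end-cosets, by the disjointness above, again forces $g\in A$.'' First, the nine cosets you list are not pairwise disjoint as stated ($1\in A\cap tAt\cap v^{-1}Av$); disjointness holds only after recording which syllables must have nontrivial $A$-part, and it then needs the malnormality of $A$ (for $t(A\sminus\{1\})t\cap A=\emptyset$) and $v^{-1}\notin AvA$ (for $Av\cap v^{-1}A=\emptyset$), not just $v\notin A$ and $v,v^{-1}\notin AtA$. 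Second, and more seriously, the end-cosets alone do not force $g\in A$: if, say, $x=a_1f_1a_2f_1^{-1}a_3$ with $a_2\ne1$ and $g^{-1}xg$ is to be read as $b_1f_2b_2f_2^{-1}b_3$, then both end comparisons merely give $g\in AtA$, which is perfectly consistent; the contradiction sits in the unchanged \emph{interior} syllable $a_2$, which would now have to lie in $v^{-1}Av$, violating the malnormality of $A$. So the argument must propagate the altered reading through the interior $G$-syllables, and that propagation --- Case 2 of the proof of Proposition \ref{prop malnor nonhnn}, the longest case analysis of the section --- is where $v^{-1}\notin AvA$ and the malnormality of $A$ do their real work; one sub-case there even forces two interior syllables to be trivial and requires going a step further into the word. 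As written, your base case would let such configurations slip through.
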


Suppose Theorem \ref{thm main nonhnn} is proved.  We now prove Theorem \ref{thm main}
in the case where $v^{-1}\notin AvA$.
\medskip

\begin{proof}[Proof of Theorem \ref{thm main} in the case where $v^{-1}\notin AvA$]\hfill

Let $f:=f_1$.  Then $A_1f=A_1f_1=A_1,$ and 
\[
A_1tf=A_1tf_1=A_1tf_1v^{-1}v=A_1f_2v=A_1v.
\]
By Theorem  \ref{thm main nonhnn}(2), $A_1$ is malnormal in $G_1$.  
By Theorem \ref{thm main nonhnn}(1),  
$A_1\cap G=A,$ and $f_2$ is of infinite order.  
Since $A_1=A*\lan f_1\ran*\lan f_2\ran,$ and $A$ does not contain involutions, $A_1$
does not contain involutions.  
\end{proof}

\begin{prop}\label{prop A1 fp}
$f_2$ is of infinite order in $G_1,$ and $A_1=A* \lan f_1\ran*\lan f_2\ran.$
\end{prop}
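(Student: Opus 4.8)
The plan is to prove both assertions simultaneously by working with the normal form of elements of the free product $G_1 = G * \lan f_1\ran$. Recall that every element of $G_1$ has a unique reduced expression as an alternating product of syllables, where each syllable is either a nontrivial element of $G$ (a ``$G$-syllable'') or a nonzero power of $f_1$ (an ``$f_1$-syllable''). To prove that $A_1 = A * \lan f_1\ran * \lan f_2\ran$ (and that $f_2$ has infinite order) it suffices to show that the canonical homomorphism $A * \lan f_1\ran * \lan f_2\ran \to G_1$ is injective, i.e.\ that every nonempty reduced word $w = c_1 c_2 \cdots c_k$, with each $c_i$ a nontrivial element of one of the three factors $A$, $\lan f_1\ran$, $\lan f_2\ran$ and consecutive $c_i$ lying in distinct factors, represents a nontrivial element of $G_1$. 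Taking $w = f_2^n$ then yields the infinite order of $f_2$ as a special case.

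First I would record the $G_1$-normal form of the powers of $f_2 = t f_1 v^{-1}$. For $n \ge 1$ one computes
\[
f_2^{\,n} = t\, f_1\, (v^{-1}t)\, f_1 \cdots f_1\, (v^{-1}t)\, f_1\, v^{-1},
\]
with $n$ occurrences of $f_1$, while $f_2^{-n} = v\, f_1^{-1}\, (tv)\, f_1^{-1}\cdots f_1^{-1}\, (tv)\, f_1^{-1}\, t$. These are reduced because $t, v^{-1} \in G\sminus 1$ and because the interior $G$-syllables $v^{-1}t$ and $tv$ are nontrivial: indeed $v \ne t$ since $t \in AtA$ while $v \notin AtA$ by Hypothesis \ref{hyp main}. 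In particular each $f_2^{\,n}$ with $n \ne 0$ is a reduced word of length $2|n|+1$, so $f_2$ has infinite order; note also that every power of $f_2$ begins and ends with a $G$-syllable and contains exactly $|n|$ many $f_1$-syllables, any two of which are separated by a $G$-syllable.

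Next I would substitute these normal forms into an arbitrary reduced word $w$ and reduce. Since each $A$-letter is a single $G$-syllable and each $f_1$-letter a single $f_1$-syllable, whereas each $f_2$-letter both begins and ends with a $G$-syllable, the only adjacencies in $w$ producing a collision of like syllables are the $A$--$\lan f_2\ran$ adjacencies; consecutive $\lan f_2\ran$-letters are forbidden in a reduced word, so the boundary $G$-syllables of two distinct $f_2$-factors never meet directly (this is what rules out a cancellation of the shape $v^{-1}\cdot v$). Consequently every $f_1$-syllable (coming from an $f_1$-letter or from the interior of an $f_2$-power) is flanked on both sides by $G$-syllables and cannot cancel. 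The crux is therefore to check that each $G$-syllable obtained by merging at a junction is nontrivial. A non-sandwiched $A$-letter produces a double merge of the form $at$, $av$, $v^{-1}a$ or $ta$, nontrivial because $t \notin A$ and $v \notin A$ (the latter since $Av \ne A$). An $A$-letter $a$ sandwiched between two $f_2$-powers produces a triple merge of the tail syllable of the left power, $a$, and the head syllable of the right power, giving one of $v^{-1}at$, $v^{-1}av$, $tat$ or $tav$; these are nontrivial because $a \ne 1$ (handling $v^{-1}av$ and $tat$, using $t^2=1$) and because $vt \notin A$ and $tv^{-1}\notin A$ (handling $v^{-1}at$ and $tav$).

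I expect this last point to be the main obstacle, and it is exactly where the full strength of Hypothesis \ref{hyp main} enters: $vt \in A$ would force $v \in At \subseteq AtA$, and $tv^{-1}\in A$ would force $v^{-1}\in tA \subseteq AtA$, both excluded by $v, v^{-1}\notin AtA$. Granting that all merged $G$-syllables are nontrivial, no two $f_1$-syllables of the expansion ever become adjacent, so the reduced form of $w$ contains exactly $(\#\,f_1\text{-letters}) + \sum_j |n_j|$ many $f_1$-syllables, where the $n_j$ are the exponents of the $f_2$-letters. If $w$ involves any $f_1$- or $f_2$-letter this number is positive, whence $w \ne 1$; otherwise $w$ is a single nontrivial $A$-letter and is again nontrivial. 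This establishes injectivity, hence $A_1 = A * \lan f_1\ran * \lan f_2\ran$ with $f_2$ of infinite order.
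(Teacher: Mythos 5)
Your proof is correct and follows essentially the same route as the paper: substitute the normal form $f_2^{\pm 1}=tf_1v^{-1},\ vf_1^{-1}t$ into a reduced word over the three factors and verify that every merged $G$-syllable ($at$, $av$, $v^{-1}a$, $ta$, $tat$, $v^{-1}av$, $tav$, $v^{-1}at$) is nontrivial using $t,v\notin A$ and $v,v^{-1}\notin AtA$. Your write-up is if anything slightly more careful than the paper's (e.g.\ explicitly checking $v^{-1}t\ne 1$ in the interior of $f_2^{\,n}$), but the ideas are identical.
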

\begin{proof}
We first show that $f_2$ is of infinite order.  Indeed let $h:=f_2^n,$ for some
$n\in\zz,$ and write $h$ in terms of $f_1$ and elements of $G$.  If $n>0,$
then $h$ starts with $t$ and ends with $v^{-1},$ while if $n<0,$ then
$h$ starts with $v$ and ends with $t$.  In particular $f_2$ has infinite order.

Next let $F:=\lan f_1, f_2\ran$.  Then any element of $F$ is a product of
alternating powers of $f_1$ and $f_2$.  As we saw in the previous paragraph of the proof,
any non-zero power of $f_2$ starts with $t$ or $v$ and ends with $t$ or $v^{-1}$.
Since $G_1=G*\lan f_1\ran$ there will be no cancellation between powers of $f_1$ and  powers of $f_2$.
It follows that $F$ is a free group.

Now consider an element in $A_1=\lan A, F\ran$.  It is an alternating product
of elements of $A$ and elements of $F$.  
When we express it as an element of $G_1=G*\lan f_1\ran,$ $f_2$ is written as $tf_1v^{-1}$
and $f_2^{-1}$ is written as $vf_1^{-1}t$.  Accordingly, an element $1\ne a\in A$ in this alternating
product is multiplied with $1, v^{-1}$ or $t$ on the left, and with $1, t$ or $v$ on the right.  
The possibilities
are: 
\begin{itemize}
\item
$v^{-1}a,\ ta,\ at,\ av:$ all are distinct from $1$ since $t$ and $v$ are not in $A$.

\item
$tat,\ v^{-1}av:$ all are distinct from $1$ since they are conjugate to $a$.

\item
$tav,\ v^{-1}at:$ all are distinct from $1$ since $v\notin AtA$.\qedhere
\end{itemize}
\end{proof}

\begin{prop}\label{prop malnor nonhnn}
$A_1$ is a malnormal subgroup of $G_1$.
\end{prop}
\begin{proof}
We will show that the existence of elements $a, b\in A_1,$ and $g\in G_1\sminus A_1,$
such that $a\ne 1$ and $g^{-1}ag=b$ leads to a contradiction.

Let 
\[
a=a_1f_{\gd_1}^{\gre_1}a_2f_{\gd_2}^{\gre_2}\cdots a_nf_{\gd_n}^{\gre_n}a_{n+1},\ a\ne 1,\qquad\text{and}
\]
\[
b=b_1f_{\gc_1}^{\mu_1}b_2f_{\gc_2}^{\mu_2}\cdots b_{\ell}f_{\gc_{\ell}}^{\mu_{\ell}}b_{\ell+1},
\]
where $a_i, b_j\in A,\ \gre_i, \mu_j=\pm 1,\ \gd_i, \gc_j\in\{1,2\},$
 and if $\gd_i=\gd_{i-1}$ and $\gre_i=-\gre_{i-1}$ then
$a_i\ne 1$ (i.e.~there are no $f_i$-cancellations in $a$),
and similarly there are no $f_i$-cancellations in $b$.
Write
\[
g=g_1f_1^{\gl_1}g_2f_1^{\gl_2}\cdots g_mf_1^{\gl_m}g_{m+1}\in G_1\sminus A_1,
\]
where $g_i\in G,\ \gl_i=\pm 1,$ and there are no $f_1$-cancellations in $g$.

Assume that $m$ is the least possible.
We have the picture as in Figure \ref{fig9} below.
\begin{figure}[h]
    \centering
\scalebox{0.70}{\input{bild9.pspdftex}}
    \caption{}
    \label{fig9}
\end{figure} 
%

\noindent
{\bf Case 1.}\ $m=n=0$.

In this case $b=g^{-1}a_1g \in A_1\cap G$. By Proposition \ref{prop A1 fp},  $A_1\cap G=A,$ so $b\in A,$
and we get a contradiction to the malnormality of $A$ in $G$.
 
\medskip

\noindent  
\smallskip

The next case to consider is: 
\smallskip

\noindent   
{\bf Case 2.}\ $m=0,$ and $n>0$.

Since $G_1=G*\lan f_1\ran,$ we must have $n=\ell$.
Consider Figure \ref{fig9}.  
By an analysis of the normal form in the free
product $G*\lan f_1\ran$ we see that
the only way we can get the equality $g_1^{-1}ag_1=b$ is 
if both $\gre_1=\mu_1$ and $\gre_n=\mu_n$.  
We distinguish a number of cases as follows.
\begin{itemize}
\item[(i)]
$\gd_1=\gc_1$ or $\gd_n=\gc_n$.

\item[(ii)]
$\gd_1\ne\gc_1$ and $\gd_n\ne \gc_n$.
\begin{itemize}
\item[(a)] 
$n=1$.
\item[(b)]
$n>1$.
\end{itemize}
\end{itemize}
{\bf Case (i).}\ 
By symmetry we may consider only the case where $\gd_1=\gc_1$.  
In this case, regardless of whether $\gre_1=1$ or $-1$ and whether $\gd_1=1$ or $2,$
we get that $g_1=a_1b_1^{-1}\in A,$ a contradiction.  
\medskip

\noindent
{\bf Case (iia).}\
By symmetry we may assume that $\gd_1=1$ and $\gc_1=2$.

Suppose first that $\gre_1=\mu_1=1$.
Then from the left side of Figure \ref{fig9} we get
$a_1^{-1}g_1b_1t=1,$ and from the right side we get $a_2g_1b_2^{-1}v=1$.
This implies that $t\in Ag_1A$ and $v^{-1}\in Ag_1A$.  But then $v^{-1}\in AtA,$
a contradiction.

Suppose next that $\gre_1=\mu_1=-1$.  Then, from the left side of Figure \ref{fig9} we get
$a_1^{-1}g_1b_1v=1,$ and from the right side we get $a_2g_1b_2^{-1}t=1$.
Again this implies that $v^{-1}\in AtA,$
a contradiction.
\medskip

\noindent
{\bf Case (iib).}\ 
By symmetry, we may assume without loss of generality that 
\[
\gd_1=1\text{ and }\gc_1=2.
\]
Suppose first that 
\[
\gre_1=\mu_1=1.
\]
We may further assume that 
\[
a_1^{-1}g_1b_1t=1\quad\text{and}\quad\gre_2=\mu_2.
\]
We now separate the discussion according to the following cases: 
\begin{itemize}
\item
$\gd_2=\gc_2$.  In this case, regardless of
the sign of $\gre_2=\mu_2$ and whether $\gd_2=\gc_2=1$ or $2,$ we get that $a_2^{-1}v^{-1}b_2=1,$
which is false since $v\notin A$.

\item
$\gre_2=\mu_2=1,\ \gd_2=1,\ \gc_2=2$.  We get $a_2^{-1}v^{-1}b_2t=1,$ contradicting $v\notin AtA$.

\item
$\gre_2=\mu_2=-1,\ \gd_2=1,\ \gc_2=2$.  We get $a_2^{-1}v^{-1}b_2v=1$ with $b_2\ne 1$.
But this contradicts the malnormality of $A$ in $G$.

\item
$\gre_2=\mu_2=1,\gd_2=2,\ \gc_2=1$.  We get $ta_2^{-1}v^{-1}b_2=1,$ contrary to $v^{-1}\notin AtA$.

\item
$\gre_2=\mu_2=-1,\ \gd_2=2,\ \gc_2=1$. We get $v^{-1}a_2^{-1}v^{-1}b_2=1$.  This implies that $v^{-1}\in AvA,$
contrary to our hypotheses.
\end{itemize}

Suppose next that
\[
\gre_1=\mu_1=-1.
\]
We may further assume that 
\[
a_1^{-1}g_1b_1v=1\quad\text{and}\quad \gre_2=\mu_2.
\]
Again we separate the discussion according to the following cases: 
\begin{itemize}
\item
$\gd_2=\gc_2$.  In this case, regardless of
the sign of $\gre_2=\mu_2$ and whether $\gd_2=\gc_2=1$ or $2,$ we get that $a_2^{-1}tb_2=1,$
which is false since $t\notin A$.

\item
$\gre_2=\mu_2=1,\ \gd_2=1,\ \gc_2=2$.  We get $a_2^{-1}tb_2t=1,$ and $b_2\ne 1$.  This contradicts 
the malnormality of $A$ in $G$.

\item
$\gre_2=\mu_2=-1,\ \gd_2=1,\ \gc_2=2$.  We get $a_2^{-1}tb_2v=1,$ impossible, as above.

\item
$\gre_2=\mu_2=1,\gd_2=2,\ \gc_2=1$.  We get $ta_2^{-1}tb_2=1$.
This case forces $a_2=b_2=1$ (because $A$ is malnormal in $G$) .  If $n=2$ we get 
$v^{-1}a_3g_1b_3^{-1}=1$.  But this together with $a_1^{-1}g_1b_1v=1$
implies that $v^{-1}\in AvA,$ contrary to our hypotheses.  Thus $n\ge 3$.  But now,
we must have $\gre_3=\mu_3,$ and arguing exactly as in the previous cases, 
for all choices of $\gre_3=\mu_3, \gd_3$ and $\gc_3,$ we get a contradiction as in one of the cases above.

\item
$\gre_2=\mu_2=-1,\ \gd_2=2,\ \gc_2=1$. We get $v^{-1}a_2^{-1}tb_2=1,$ impossible, as above.
\end{itemize}
\smallskip

Next we consider:
\smallskip

\noindent
{\bf Case 3.}\ $n=0=\ell$ and $m > 0$.

\noindent
Notice that in this case there will be no cancellations in Figure \ref{fig9},
since otherwise we must either have $g_1^{-1}a_1g_1=1,$ or $g_{m+1}^{-1}b_1^{-1}g_{m+1}=1,$
which is false.

Hence we may assume that either $n>0$ or $\ell>0$ or both.
By symmetry we may consider the following case:
\smallskip

\noindent
{\bf Case 4.}\ $m > 0$ and $n >0$.

Notice that $f_i$-cancellations have to occur in the product $g^{-1}agb^{-1},$ since
it is equal to $1$.  Now $f_i$-cancellations can occur
only if  one of the following cases occurs:
\begin{itemize}
\item[(i)]
The product $f_1^{-\gl_1}g_1^{-1}a_1f_{\gd_1}^{\gre_1}$ equals $1,\ v^{-1},$ or $t$.
\item[(ii)]
The product $f_{\gd_n}^{\gre_n}a_{n+1}g_1f_1^{\gl_1}$ equals $1,\ t$ or $v$.
\item[(iii)]
The product $f_1^{\gl_m}g_{m+1}b_1f_{\gc_1}^{\mu_1}$ equals $1,\ v^{-1}$ or $t$.
\item[(iv)]
The product $f_{\gc_{\ell}}^{\mu_{\ell}}b_{\ell+1}g_{m+1}^{-1}f_1^{-\gl_m}$
equals $1,\ t$ or $v$.
\end{itemize}
\medskip

By symmetry, we may consider only case (i).
If $f_1^{-\gl_1}g_1^{-1}a_1f_{\gd_1}^{\gre_1}=1,$
then $g_1f_1^{\gl_1}=a_1f_{\gd_1}^{\gre_1}$.  Let 
$h:=g_2f_1^{\gl_2}\cdots f_1^{\gl_m}g_{m+1},$ and
\[
a'=a_2f_{\gd_2}^{\gre_2}\cdots f_{\gd_n}^{\gre_n}a_{n+1}g_1f_1^{\gl_1}=a_2f_{\gd_2}^{\gre_2}\cdots f_{\gd_n}^{\gre_n}a_{n+1}a_1f_{\gd_1}^{\gre_1}\in A_1.
\]
Notice that $a'$ is conjugate to $a,$ so $a'\ne 1$.
Also $h=f_1^{-\gl_1}g_1^{-1}g,$ and $h\notin A_1,$ since $f_1^{-\gl_1}g_1^{-1}\in A_1,$ while $g\notin A_1$.
We get (see Figure \ref{fig9}) $g^{-1}ag=h^{-1}a'h\in A_1,$ contradicting the minimality of $m$.
\smallskip

If $f_1^{-\gl_1}g_1^{-1}a_1f_{\gd_1}^{\gre_1}=v^{-1},$ then $g_1f_1^{\gl_1}=a_1f_{\gd_1}^{\gre_1}v$.
Let $h:=vg_2f_1^{\gl_2}\cdots f_1^{\gl_m}g_{m+1},$ and 
\[
a'=a_2f_{\gd_2}^{\gre_2}\cdots f_{\gd_n}^{\gre_n}a_{n+1}g_1f_1^{\gl_1}v^{-1}=a_2f_{\gd_2}^{\gre_2}\cdots f_{\gd_n}^{\gre_n}a_{n+1}a_1f_{\gd_1}^{\gre_1}\in A_1.
\]
As above, $1\ne a'\in A_1,$ and if $h\in A_1,$ then 
$g=g_1f_1^{\gl_1}v^{-1}h=a_1f_{\gd_1}^{\gre_1}h\in A_1,$
which is false.
We again get $g^{-1}ag=h^{-1}a'h\in A_1,$ which contradicts the minimality of $m$.

Finally if $f_1^{-\gl_1}g_1^{-1}a_1f_{\gd_1}^{\gre_1}=t,$ then $g_1f_1^{\gl_1}=a_1f_{\gd_1}^{\gre_1}t$.
Let $h:=tg_2f_1^{\gl_2}\cdots f_1^{\gl_m}g_{m+1},$ and 
\[
a'=a_2f_{\gd_2}^{\gre_2}\cdots f_{\gd_n}^{\gre_n}a_{n+1}g_1f_1^{\gl_1}t=a_2f_{\gd_2}^{\gre_2}\cdots f_{\gd_n}^{\gre_n}a_{n+1}a_1f_{\gd_1}^{\gre_1}\in A_1.
\]
As above we get $1\ne a'\in A_1,$ and $h\notin A_1,$
and  again we get the same contradiction.

Note that if $\ell=0,$ then no cancellation of the type (iii) or (iv) above can occur.
\end{proof}
\begin{proof}[Proof of Theorem \ref{thm main nonhnn}]\hfill
\medskip

\noindent
By Proposition \ref{prop A1 fp}, part (1) holds, and by Proposition \ref{prop malnor nonhnn} part (2) holds.
\end{proof}
\section{The case $v$ is an involution and $v\notin AtA$}\label{sect hnn}
The purpose of this section is to prove Theorem \ref{thm main} of
the introduction  in the case where $v$ is an involution. We refer
the reader to Hypothesis \ref{hyp main} and to its explanation in \S\ref{sect explanation}.
Thus, throughout this section we assume that $v$ is an involution and that $v\notin AtA$.  Further, throughout
this section we use the notation and hypotheses of Theorem \ref{thm main}.

Let $\lan f\ran$ be an infinite cyclic group.
We define an HNN extension
\[
G_1=\lan G, f\mid f^{-1}tf=v\ran,\qquad A_1=\lan A, f\ran.
\]

In this section we will prove the following theorem.
 
\begin{thm}\label{thm main hnn}
We have
\begin{enumerate}
\item
$A_1=A*\lan f\ran$;

\item
$A_1$ is malnormal in $G_1.$
\end{enumerate}
\end{thm}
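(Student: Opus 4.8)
The plan is to push the entire argument through Britton's Lemma for the HNN extension $G_1=\lan G,f\mid f^{-1}tf=v\ran$, whose associated subgroups are $\lan t\ran=\{1,t\}$ and $\lan v\ran=\{1,v\}$, each of order $2$ since $t$ and $v$ are involutions. For part (1), the key observation is that, because $t,v\notin A$, \emph{no} nontrivial element of $A$ lies in $\lan t\ran\cup\lan v\ran$. Hence if I take a reduced word $a_0f^{\gd_1}a_1\cdots f^{\gd_p}a_p$ of $A*\lan f\ran$ (with $\gd_i\ne 0$ and interior $a_i\ne 1$) and expand the powers of $f$, the only $G$-syllables standing between two $f$'s of opposite sign are the nontrivial $a_i$, none of which lies in an associated subgroup; so no Britton pinch occurs and the word is Britton-reduced of positive $f$-length, hence $\ne 1$. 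This makes the canonical map $A*\lan f\ran\to G_1$ injective, giving $A_1=A*\lan f\ran$ with $f$ of infinite order; and since a reduced word of positive $f$-length never lies in $G$, we also obtain $A_1\cap G=A$.

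For part (2) I would argue in the spirit of Proposition \ref{prop malnor nonhnn}. Suppose for contradiction that $a,b\in A_1$, $a\ne 1$, and $g\in G_1\sminus A_1$ satisfy $g^{-1}ag=b$, and choose $g$ of minimal Britton $f$-length $r$. Writing $a,b$ in $A*\lan f\ran$ normal form and $g=g_0f^{\gl_1}g_1\cdots f^{\gl_r}g_r$ Britton-reduced ($g_i\in G$, $\gl_i=\pm1$), the relation $g^{-1}agb^{-1}=1$ cannot be Britton-reduced once it has positive $f$-length, so pinches must occur, and they can occur only at the four junctions where $g^{\pm1}$ abuts $a$ or $b$. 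Every admissible pinch has the form $f^{-1}(\cdots)f\in\lan t\ran$ or $f(\cdots)f^{-1}\in\lan v\ran$, and I would enumerate them as in Cases 1--4 there: first dispose of $r=0$ (then $g\in G\sminus A$ and, matching $f$-lengths, $b$ is forced into $G$, so the malnormality of $A$ in $G$ applies), and then treat $r\ge1$. In the latter range each pinch either absorbs a head or tail syllable of $g$ into $A_1$, yielding a shorter conjugator $h$ with $1\ne a'\in A_1$ and $h\notin A_1$ and $h^{-1}a'h=b$, contradicting the minimality of $r$; or it produces an equation forcing $v\in AtA$ or $v^{-1}\in AtA$, against Hypothesis \ref{hyp main}; or it directly contradicts the malnormality of $A$ in $G$ (indeed, when $a\in A$ and no pinch occurs at the innermost junction, the resulting $G$-syllable $g_0^{-1}ag_0$ lies in $A$ only if it is trivial, by malnormality, which is absurd).

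The one genuinely new branch --- and the \textbf{main obstacle} --- is the innermost pinch, in the case $a\in A$, of the shape $g_0^{-1}ag_0\in\lan t\ran$ (or its mirror $\in\lan v\ran$): since $g_0^{-1}ag_0\ne 1$, this can only read $g_0^{-1}ag_0=t$ (respectively $=v$), which would make the nontrivial element $a\in A$ conjugate in $G$ to the involution $t$ (respectively $v$), hence an involution. This is impossible \emph{precisely because $A$ contains no involutions}, and it is the single place where that hypothesis is needed, as announced at the end of the introduction. Once this branch is closed, every case terminates in a contradiction, so $A_1$ is malnormal in $G_1$, completing both parts of the theorem.
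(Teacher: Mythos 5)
Your part (1) is correct and is essentially the paper's Lemma \ref{A1 is a free product}: since $t,v\notin A$, no nontrivial element of $A$ lies in an associated subgroup, so a word in $A*\lan f\ran$ normal form is Britton-reduced in $G_1$. Your identification of where the hypothesis that $A$ contains no involutions enters (the innermost pinch $g_1^{-1}a_1g_1\in\{t,v\}$ forcing a nontrivial element of $A$ to be conjugate to an involution) also matches the paper exactly, and your $r\ge 1$ analysis parallels the paper's Case 2.

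The genuine gap is in your disposal of $r=0$. You claim that when the conjugator $g$ lies in $G$, ``matching $f$-lengths, $b$ is forced into $G$.'' That is true only when $a\in A$. If $a$ has positive $f$-length $m$, then $g^{-1}ag$ is already Britton-reduced of $f$-length $m$ (its interior $G$-syllables are the nontrivial $a_i\in A$, which never pinch), so $b$ has $f$-length $m>0$ and is \emph{not} in $G$; no pinch is available, and nothing reduces to the malnormality of $A$ in $G$. Ruling out this configuration is the bulk of the paper's Proposition \ref{prop malnormal hnn} (Case 1 with $m=n\ge 1$, Figures \ref{fig2}--\ref{fig4}): one must invoke the \emph{uniqueness} clause of the normal form theorem to produce connecting elements $p_i,q_i$, observe that $p_1=a_1^{-1}g_1b_1\notin A$ because $g_1\notin A$, propagate this to conclude $p_i,q_i\in\{t,v\}$ for all $i$, and then run a case analysis on the signs $\ga_1,\ga_2$ in which each branch ends either in $v\in AtA$ (contradicting Hypothesis \ref{hyp main}) or in an equation such as $ta_2t=b_2$ with $a_2\ne 1$ (contradicting malnormality of $A$ in $G$); the case $m=1$ needs a separate cut-and-paste argument, again landing on $v\in AtA$. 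This case is nonvacuous --- already for $m=1$ one gets $g_1=a_1tb_1^{-1}$ and $b_2=va_2a_1tb_1^{-1}$, hence $v\in AtA$ --- and it is precisely where the hypothesis $v\notin AtA$ is consumed, a hypothesis your proposal never uses. Without it the proof of malnormality is incomplete.
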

Suppose Theorem \ref{thm main hnn} is proved.  We now use it to prove Theorem \ref{thm main}
in the case where $v$ is an involution. 
\smallskip

\noindent
\begin{proof}[Proof of Theorem \ref{thm main} in the case where $v$ is an involution]\hfill
\medskip

\noindent
We have $A_1f=A_1$ and $A_1tf=A_1fv=A_1v$.  By Theorem \ref{thm main hnn}(2), $A_1$
is malnormal in $G_1$.  By Theorem \ref{thm main hnn}(1), $A_1\cap G=A$. 
Also $A_1$ does not
contain involutions since $A_1=A*\lan f\ran,$ and $A$ does not contain involutions.
\end{proof}

\begin{remark}\label{rem hnn}
Any element of $G_1$ has the form 
\[
g=g_1f^{\gd_1}g_2\cdots g_mf^{\gd_m}g_{m+1},
\]
where $g_i\in G,\ i=1,\dots m+1,\ \gd_i=\pm 1,\ i=1,\dots m$.
According to Britton's lemma we say that there are {\it no $f$-cancellations in $g$} if the equality
$\gd_i=-\gd_{i-1}$ implies that if $\gd_i=1,$ then $g_i\ne 1, t,$ while if $\gd_i=-1,$ then
$g_i\ne 1,v$.
 
Further let $g$ be as above, let $h\in G_1,$ and write:
\[
h=h_1f^{\eta_1}h_2\cdots h_kf^{\eta_k}h_{k+1},
\]
where $h_j\in G,\ j=1,\dots k+1,\ \eta_j=\pm 1,\ j=1,\dots k,$  
and there are no $f$-cancellations in $g$ and $h$.

Then $g=h$    
if and only if $m=k,\ \gd_i=\eta_i,\ i=1,\dots, m,$
and there are elements $w_0,z_1,w_1,z_2,w_2,\dots,z_m,w_m,z_{m+1}$ such that
for every oriented loop in Figure \ref{fig1} the product of edges is $1,$ that is:

\begin{itemize}
\item[(a)]
$h_i=w_{i-1}g_iz_i,\ i=1,\dots, m+1;$

\item[(b)]  $w_0=1,\ z_{m+1}=1;$

\item[(c)]  if $\gd_i=1,$ then either $z_i=1,\ w_i=1,$ or $z_i=t,\ w_i=v;$

\item[(d)]  if $\gd_i=-1,$ then either $z_i=1,\ w_i=1,$ or $z_i=v,\ w_i=t.$
\end{itemize}
\clearpage

\begin{figure}[h]
    \centering
\scalebox{0.70}{\input{bild8.pspdftex}}
    \caption{}
    \label{fig1}
\end{figure}  
\end{remark}
\begin{lemma}\label{A1 is a free product}
$A_1=A*\lan f\ran$.
\end{lemma}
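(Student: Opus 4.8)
The plan is to prove that the canonical epimorphism $A*\lan f\ran\onto\lan A,f\ran=A_1$ is an isomorphism; this is precisely the assertion $A_1=A*\lan f\ran$. Since $G\le G_1$, it suffices to show that every \emph{nontrivial reduced} word
\[
w=a_0f^{n_1}a_1f^{n_2}\cdots f^{n_k}a_k,\qquad a_i\in A,\ n_i\in\zz\sminus\{0\},\ a_1,\dots,a_{k-1}\ne 1,
\]
represents a nontrivial element of $G_1$. If $k=0$ then $w=a_0\in A\sminus\{1\}$ and we are done because $A\le G$; so assume $k\ge 1$.

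The idea is to rewrite $w$ in the HNN normal form of Remark \ref{rem hnn} and to verify that it has no $f$-cancellations, so that Britton's lemma forces $w\ne 1$. Expand each block $f^{n_i}$ into $|n_i|$ letters $f^{\pm1}$ of constant sign (inserting the identity of $G$ between consecutive ones); this displays $w$ as $g_1f^{\gd_1}g_2\cdots f^{\gd_m}g_{m+1}$ with $m=\sum_i|n_i|\ge 1$. A cancellation, by the criterion recalled in Remark \ref{rem hnn}, can only occur where two consecutive exponents have opposite signs. Inside a single block $f^{n_i}$ all exponents share the sign of $n_i$, so none occurs there. The sign can change only at a junction $f^{n_i}a_if^{n_{i+1}}$ with $n_i,n_{i+1}$ of opposite signs, where the intermediate syllable is an \emph{internal} letter $a_i$ $(1\le i\le k-1)$, hence $a_i\ne 1$. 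If the local pattern is $f^{-1}a_if$, a cancellation would force $a_i\in\{1,t\}$; if it is $fa_if^{-1}$, it would force $a_i\in\{1,v\}$. But $t\in G\sminus A$ by hypothesis, and $v\notin A$ since $v$ is an involution while $A$ contains none; together with $a_i\ne 1$ this rules out every cancellation. Hence $w$ is $f$-reduced with $m\ge 1$, and Britton's lemma gives $w\ne 1$. (Taking $k=1$, $a_0=a_1=1$ in particular shows $f$ has infinite order.)

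The only delicate point is the bookkeeping in the main step: correctly matching the two pinch patterns $f^{-1}(\,\cdot\,)f$ and $f(\,\cdot\,)f^{-1}$ to the associated subgroups $\lan t\ran$ and $\lan v\ran$ of the HNN extension $\lan G,f\mid f^{-1}tf=v\ran$, and then observing that the internal letters $a_i$ being \emph{nontrivial} elements of $A$, combined with $t,v\notin A$, is exactly what excludes each possible pinch. Note that only $t,v\notin A$ is used here; the stronger hypothesis $v\notin AtA$ is not needed for the free-product structure and will instead enter in the proof of the malnormality of $A_1$.
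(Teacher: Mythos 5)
Your proof is correct and takes essentially the same route as the paper: both arguments reduce to Britton's lemma for the HNN extension $\lan G,f\mid f^{-1}tf=v\ran$ and use only that $t,v\notin A$ to rule out pinches (the paper phrases it as uniqueness of the alternating expression via the $z_i,w_i$ connecting elements, you phrase it as triviality of the kernel of $A*\lan f\ran\onto A_1$). Your closing remark that $v\notin AtA$ is not needed here but only for malnormality also matches the paper's structure.
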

\begin{proof}
Suppose that
\[
g_1f^{\gd_1}g_2\cdots g_mf^{\gd_m}g_{m+1}=h_1f^{\gd_1}h_2\cdots h_mf^{\gd_m}h_{m+1},
\]
and $h_i, g_i\in A,\ i=1,\dots, m+1$. By Remark \ref{rem hnn}, $h_1=g_1z_1,$
hence, by (a)--(d) of Remark \ref{rem hnn}, since $t, v\notin A,$ we have $z_1=1,$ 
so $h_1=g_1,$ and then, by Remark \ref{rem hnn} (c) and (d), $w_1=1$.

Assume $w_i=1$.  Then $h_{i+1}=w_ig_{i+1}z_{i+1}=g_{i+1}z_{i+1}$.  
Since $t, v\notin A,$ this implies $z_{i+1}=1,$ and
then $w_{i+1}=1$.  So $g_i=h_i$ for $i=1,\dots, m+1$.  Hence $A_1=A*\lan f\ran$.
\end{proof}

\begin{prop}\label{prop malnormal hnn}
$A_1$ is malnormal in $G_1$. 
\end{prop}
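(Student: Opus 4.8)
The plan is to argue by contradiction exactly as in the proof of Proposition~\ref{prop malnor nonhnn}, replacing the free‑product normal form by Britton's lemma for the HNN extension (Remark~\ref{rem hnn}). Suppose $A_1$ is not malnormal, so there are $a,b\in A_1$ and $g\in G_1\sminus A_1$ with $a\ne1$ and $g^{-1}ag=b$. Using $A_1=A*\lan f\ran$ (Lemma~\ref{A1 is a free product}) I would write $a=a_1f^{\gre_1}\cdots f^{\gre_n}a_{n+1}$ and $b=b_1f^{\mu_1}\cdots f^{\mu_\ell}b_{\ell+1}$ in reduced form (all $a_i,b_j\in A$), and write $g=g_1f^{\gl_1}\cdots f^{\gl_m}g_{m+1}$ in Britton‑reduced form, choosing $m$ minimal among all such counterexamples. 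Two facts are used throughout: $A_1\cap G=A$, and no internal Britton pinch can occur inside $a$ or $b$, since their intermediate syllables lie in $A$ while $t,v\notin A$.

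First I would dispose of the base case $m=0$, i.e.\ $g\in G\sminus A$. If $n=0$ then $b=g^{-1}a_1g\in A_1\cap G=A$, contradicting the malnormality of $A$ in $G$. If $n\ge1$, the leftmost $G$‑syllable of the reduced word $g^{-1}ag$ is $g^{-1}a_1$; for $g^{-1}ag$ to lie in $A_1$ this syllable must lie in $A$, forcing $g\in A$, again a contradiction.

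Next I would treat $m\ge1$ with $n=0$ (which, after interchanging the roles of $a$ and $b$ via $g\leftrightarrow g^{-1}$, also settles $\ell=0$). Here $a=a_1\in A\sminus\{1\}$ and the centre of $g^{-1}a_1g$ is $f^{-\gl_1}cf^{\gl_1}$ with $c:=g_1^{-1}a_1g_1$. If $g_1\in A$ then $c\in A\sminus\{1\}$, and absorbing $g_1f^{\gl_1}$ produces $h:=f^{-\gl_1}g_1^{-1}g\notin A_1$ of Britton length $m-1$ together with $a':=f^{-\gl_1}cf^{\gl_1}\in A_1\sminus\{1\}$ satisfying $h^{-1}a'h=b$, contradicting minimality of $m$. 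If $g_1\notin A$ then $c\notin A$ by malnormality of $A$ (Lemma~\ref{lem hyp}); should $c\in\{t,v\}$ then $a_1=g_1cg_1^{-1}$ would be an involution lying in $A$, which is impossible --- this is the one and only point at which the hypothesis that $A$ contains no involutions is used. Otherwise $c\notin A\cup\{t,v\}$, the central pair does not pinch, so $g^{-1}a_1g$ is Britton‑reduced with a central $G$‑syllable $c\notin A$, whence $g^{-1}a_1g\notin A_1$, a contradiction.

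Finally, for $m\ge1$ and $n,\ell\ge1$, I would run the seam analysis of Proposition~\ref{prop malnor nonhnn}. Since $g^{-1}agb^{-1}=1$ has positive total $f$‑length and no internal pinches are available, Britton's lemma forces a pinch at one of the four seams between the blocks $g^{-1},a,g,b^{-1}$; by the symmetries $a\leftrightarrow b^{-1}$ and $g\leftrightarrow g^{-1}$ it suffices to treat the seam $g^{-1}\mid a$. A pinch there means $g_1^{-1}a_1\in\{1,t,v\}$, and using $tf=fv$ and $vf^{-1}=f^{-1}t$ one rewrites $g=a_1f^{\gre_1}h$ with $h$ of Britton length $m-1$ and $h\notin A_1$, so that $g^{-1}ag=h^{-1}a'h$ with $a':=f^{-\gre_1}a_1^{-1}aa_1f^{\gre_1}\in A_1\sminus\{1\}$, contradicting minimality of $m$. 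I expect the main labour to be this seam bookkeeping together with verifying that in the $n=0$ case the central pinch cannot be avoided; and I expect the genuinely essential ingredient --- the sole place the no‑involutions hypothesis enters --- to be the central‑pinch subcase $c\in\{t,v\}$, the seam reductions themselves being formal.
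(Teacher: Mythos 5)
Your skeleton --- minimal counterexample on the Britton length of $g$, pinches located at the seams, and the no-involutions hypothesis entering exactly at a central pinch $g_1^{-1}a_1g_1\in\{t,v\}$ --- matches the paper's proof. But there is a genuine gap, and it sits precisely in the step you treat as trivial. Twice you read off membership information from a single $G$-syllable of a Britton-reduced word: in the base case $m=0$, $n\ge1$ you assert that the leftmost syllable $g_1^{-1}a_1$ of $g^{-1}ag\in A_1$ ``must lie in $A$,'' and in the $n=0$ case you assert that a central syllable $c\notin A\cup\{t,v\}$ forces $g^{-1}a_1g\notin A_1$. Neither inference is valid: by Remark \ref{rem hnn}, two Britton-reduced words represent the same element exactly when their syllables are related by $h_i=w_{i-1}g_iz_i$ with $w_{i-1},z_i\in\{1,t,v\}$, so a reduced word representing an element of $A_1$ need only have its leftmost syllable in $A\cup At\cup Av$ and its interior syllables in $\{1,t,v\}A\{1,t,v\}$. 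From $g_1^{-1}a_1\in A\cup At\cup Av$ you get only $g_1\in A\cup AtA\cup AvA$, which is no contradiction.

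This cannot be patched by bookkeeping alone, because the $m=0$, $n\ge1$ case is exactly where the standing hypothesis $v\notin AtA$ (Hypothesis \ref{hyp main}) must be used, and your argument never invokes it. Indeed, if $v$ lay in $AtA$ then, being an involution, malnormality of $A$ forces $v=\gamma t\gamma^{-1}$ for some $\gamma\in A$, and the relation $tf=fv$ gives $(\gamma t)^{-1}(\gamma f)(\gamma t)=tf\gamma t=fv\gamma t=f\gamma$; so $\gamma t\in G\sminus A\subseteq G_1\sminus A_1$ would conjugate $\gamma f\in A_1\sminus\{1\}$ back into $A_1$ and the proposition would be false. The paper spends its entire Case 1 (the $k=0$ case, Figures \ref{fig2}--\ref{fig4}) on your base case: it shows the connecting elements $p_i,q_i$ are forced to lie in $\{t,v\}$ and then, comparing two consecutive $f$-syllables (or a single one when the length is $1$), derives either $v\in AtA$ or a violation of the malnormality of $A$ in $G$. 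You need to supply that argument, and likewise handle the subcase $m\ge1$, $n=0$, $\ell\ge1$ with no central pinch, where the required pinch occurs instead at a seam involving $b$. The seam reductions and the central-pinch involution argument you describe are correct and agree with the paper.
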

\begin{proof}
We will show that the existence of elements $a, b\in A_1,\ g\in G_1\sminus A_1$
such that $a\ne 1$ and $g^{-1}ag=b$ leads to a contradiction. Let
\[
a=a_1f^{\ga_1}a_2\cdots a_mf^{\ga_m}a_{m+1},\qquad b=b_1f^{\gb_1}b_2\cdots b_nf^{\gb_n}b_{n+1},
\]
where $a_i, b_i\in A,\ \ga_i, \gb_i=\pm 1,$ and if $\ga_i=-\ga_{i-1},$ then $a_i\ne 1,$
and if $\gb_i=-\gb_{i-1},$ then $b_i\ne 1$.  Recall that by Lemma \ref{A1 is a free product}, $A_1=A*\lan f\ran,$
and therefore in the above expressions for $a$ and $b$ there are no $f$-cancellations.  We also have
\[
g=g_1f^{\gd_1}g_2\cdots g_kf^{\gd_k}g_{k+1},
\]
where $g_i\in G,\ \gd_i=\pm 1,$ and $\gd_i=-\gd_{i-1}$ implies that if $\gd_i=1,$ then $g_i\ne 1, t,$
and if $\gd_i=-1,$ then $g_i\ne 1, v$.

We assume that $k$ is the least possible.
\medskip

\noindent
{\bf Case 1.}\ $k=0$.

Then $g=g_1,$ so we have 
\[
g_1^{-1}a_1f^{\ga_1}a_2\cdots a_mf^{\ga_m}a_{m+1}g_1=b_1f^{\gb_1}b_2\cdots b_nf^{\gb_n}b_{n+1}.
\]  
We conclude that $n=m,\ \ga_i=\gb_i,$ for $i=1,2,\dots, m$.  If $m=n=0,$ then $a=a_1\ne 1,\ b=b_1,$ so 
$g_1^{-1}a_1g_1=b_1$ which is impossible because $A$ is malnormal in $G$.

Let $m=n>0$.  We obtain Figure \ref{fig2} below, where
\begin{gather}\label{eq prop 4.4}
\text{if }\ga_i=1,\text{ then either }p_i=q_i=1,\text{ or }p_i=t,\ q_i=v,\\\notag
\text{and if }\ga_i=-1,\text{ then either }p_i=q_i=1,\text{ or }p_i=v,\ q_i=t.
\end{gather}

\begin{figure}[h]
    \centering
\scalebox{0.70}{\input{bild1.pspdftex}}
    \caption{}
    \label{fig2}
\end{figure}  

\noindent
We have $p_1=a_1^{-1}g_1b_1\notin A$ since $g_1\notin A$.  Now 
assume $p_i\notin A$.  Then $q_i\notin A$ by \eqref{eq prop 4.4} 
and by Britton's Lemma $p_{i+1}=a_{i+1}^{-1}q_ib_{i+1}$ is not in $A$ either.
In Particular $p_i, q_i\ne 1$ for all $i\le m$.
%
%

If $m=n\ge 2,$ consider Figure \ref{fig3}:
\begin{figure}[h]
    \centering
\scalebox{0.70}{\input{bild2.pspdftex}}
    \caption{}
    \label{fig3}
\end{figure} 

\noindent
We now use equation \eqref{eq prop 4.4}.
If $\ga_1=1,\ \ga_2=1,$ then $q_1=v,\ p_2=t,$ so $v=a_2tb_2^{-1}\in AtA,$ a contradiction.

If $\ga_1=1,\ \ga_2=-1,$ then $a_2\ne 1,\ q_1=v,\ p_2=v$.  Then $va_2v=b_2,$
contradicting the malnormality of $A$ in $G$.

If $\ga_1=-1,\ \ga_2=1,$ then $a_2\ne 1,\ q_1=t, p_2=t,$ and $ta_2t=b_2,$ again contradicting the malnormality of $A$ in $G$.

If $\ga_1=-1,\ \ga_2=-1,$ then $q_1=t,\ p_2=v,$ and $v=a_2^{-1}tb_2\in AtA,$
a contradiction.

So we are left with the possibility $m=n=1$.  In Figure \ref{fig2} above, after cutting and pasting
we obtain the following figure \ref{fig4}:
\begin{figure}[h]
    \centering
\scalebox{0.70}{\input{bild3.pspdftex}}
    \caption{}
    \label{fig4}
\end{figure} 

\noindent
If $\ga_1=1,$ then $p_1=t,\ q_1=v,$ and if $\ga_1=-1,$ then $p_1=v,\ q_1=t$.  In
both cases $v\in AtA,$ contrary to  the choice of $v$.
\medskip

\noindent
{\bf Case 2.} $k>0$.

Consider Figure \ref{fig5} below.
\begin{figure}[h]
    \centering
\scalebox{0.70}{\input{bild4.pspdftex}}
    \caption{}
    \label{fig5}
\end{figure} 
 
Notice that $f$-cancellations have to occur in the product $g^{-1}agb^{-1},$ since
it is equal to $1$. Therefore,   at least one of the following cases must happen:
\begin{enumerate}
\item
$m=0,\ a=a_1,$ and $f^{-\gd_1}$ cancels with $f^{\gd_1}$ in the product $f^{-\gd_1}g_1^{-1}a_1g_1f^{\gd_1};$

\item
$n=0,\ b=b_1$ and $f^{\gd_k}$ cancels with $f^{-\gd_k}$ in the product $f^{\gd_k}g_{k+1}b_1g_{k+1}^{-1}f^{-\gd_k};$

\item
$m>0,$ and $f^{-\gd_1}$ cancels with $f^{\ga_1}$ in the product $f^{-\gd_1}g_1^{-1}a_1f^{\ga_1};$

\item
$m>0,$ and $f^{\ga_m}$ cancels with $f^{\gd_1}$ in the product $f^{\ga_m}a_{m+1}g_1f^{\gd_1};$

\item
$n>0,$ and $f^{\gd_k}$ cancels with $f^{\gb_1}$ in the product $f^{\gd_k}g_{k+1}b_1f^{\gb_1};$

\item
$n>0,$ and $f^{\gb_n}$ cancels with $f^{-\gd_k}$ in the product $f^{\gb_n}b_{n+1}g_{k+1}^{-1}f^{-\gd_k}.$ 
\end{enumerate}  
In case (1), $a=a_1\ne 1,$ so $g_1^{-1}a_1g_1=t\text{ or }v$. Hence $a_1$
is conjugate to an involution, which is impossible, as $A$ does not contain involutions.

Similarly, in case (2) $b=b_1\ne 1,$ so $g_{k+1}b_1g_{k+1}^{-1}=t\text{ or }v,$ again a contradiction.

In case (3) we have Figure \ref{fig6} below,
\begin{figure}[h]
    \centering
\scalebox{0.70}{\input{bild5a.pspdftex}}
    \caption{}
    \label{fig6}
\end{figure} 

\noindent
where $p,q\in\{1, t, v\}$ by Britton's Lemma.  We define 
\[
a'=a_2\cdots a_mf^{\ga_m}a_{m+1}a_1f^{\ga_1}\quad\text{ and }\quad h=qg_2\cdots g_kf^{\gd_k}g_{k+1}.
\]
We have $h^{-1}a'h=b,\ a'$ is conjugate to $a$. So $a\ne 1$ implies $a'\ne 1$.  Also the $f$-length
of $h$ is $k-1$.  
Notice that
$h=f^{-\ga_1}a_1^{-1}g,$ and $h\notin A_1$ since $f^{-\ga_1}a_1^{-1}\in A_1,$ and $g\notin A_1$.
We obtained a contradiction to the minimality of $k$.

The remaining cases are handled in entirely the same way. 
\end{proof}
\smallskip

\begin{proof}[Proof of Theorem \ref{thm main hnn}]\hfill
\medskip

\noindent
By Lemma \ref{A1 is a free product}, part (1) holds, and by Proposition \ref{prop malnormal hnn},
part (2) holds.
\end{proof}

\section{The proof of Theorem \ref{thm s2t in char 2}}

In this section we show how Theorem \ref{thm s2t in char 2} of the
introduction follows from Theorem \ref{thm main}.

Let $G$ be a group with a malnormal subgroup $A$ such that $A$ contains no involutions.   
Assume that $G$ is {\it not}\, $2$-transitive on the set of right cosets $A\backslash G$.
If there exists an involution $t\in G\sminus A,$ set $G_0:=G,\ A_0:=A$.
Otherwise, let $G_0:=G*\lan t\ran,$ where $t$ is an involution, and let $A_0=A$.
Then, by  \cite[Corollary 4.1.5]{MaKS}, $G$ is malnormal in $G_0,$ and
then since $A$ is malnormal in $G,$ it is malnormal in $G_0$.

We now construct a sequence of groups $G_i$ and of subgroups
$A_i\le G_i,\ i=0,1,2\dots,$ having the following properties for all $i\ge 0$:
\begin{enumerate}
\item
$G_i\le G_{i+1},$ and $A_i\le A_{i+1};$

\item
$A_i$ is malnormal in $G_i$ and $t\in G_i\sminus A_i;$

\item
$A_i$ does not contain involutions;

\item
$A_{i+1}\cap G_i=A_i$;

\item
for each $v\in G_i\sminus A_i$ there
exists an element $f_v\in A_{i+1}$ such that  $A_{i+1}tf_v=A_{i+1}v.$
\end{enumerate}

In order to construct $G_{i+1}, A_{i+1}$ from $G_i, A_i$
we enumerate the set\linebreak $G_i\sminus A_i=\{v_\alpha:\alpha< \rho\}$
for some ordinal $\rho$.  For each ordinal $\ga<\gr$ we construct the pair 
$G_i^{\ga},\ A_i^{\ga}$ and the element $f_{v_{\ga}}\in A_i^{\ga}$ having
the following properties:
\begin{itemize}
\item[(i)]  $G_i^{\gb}\le G_i^{\ga},$ for all ordinals $\gb<\ga;$

\item[(ii)] $A_i^{\ga}$ is malnormal in $G_i^{\ga}$ and $t\in G_i^{\ga}\sminus A_i^{\ga};$

\item[(iii)]  $A_i^{\ga}$ contains no involutions;

\item[(iv)]  $A_i^{\ga}\cap G_i^{\gb}=A_i^{\gb}$ for all $\gb<\ga;$

\item[(v)]   $f_{v_{\ga}}\in A_i^{\ga}$ and $A_i^{\ga}tf_{v_{\ga}}=A_i^{\ga}v_{\ga}$.
\end{itemize}
We let $G_i^0=G_i$ and $A_i^0=A_i$.  If $\ga=\gb+1,$
we construct $(G_i^{\alpha},\ A_i^{\alpha}, f_{v_{\ga}})$ from  $(G_i^{\gb},\ A_i^{\gb})$
as follows:
If there is some $f\in A_i^{\gb}$ with
$A_i^\gb tf = A_i^\gb v_\alpha$ we let $G_i^{\alpha}=G_i^{\gb},\ $
$A_i^{\alpha}= A_i^{\gb}$ and $f_{v_{\ga}}=f$.
Otherwise apply Theorem \ref{thm main} to  $G_i^\gb,\ A_i^\gb$ with $u=1$ and $v=v_\alpha$
to obtain the groups  $G_i^{\alpha},\ A_i^{\alpha}$ and the element $f_{v_{\ga}}\in A_i^{\ga}$.
Of course, by construction, $A_i^{\ga}$ contains no involutions
and $A_i^{\ga}\cap G_i^{\gb}=A_i^{\gb}$. So (i)--(v) hold.

For a limit ordinal $\alpha$ we put $G_i^{(\ga,1)}=\bigcup_{\beta<\alpha}
G_i^\beta,\ A_i^{(\ga,1)}=\bigcup_{\beta<\alpha} A_i^\beta$.
We now show that  when $\ga$ is a limit ordinal $A_i^{(\ga,1)}$ is malnormal in $G_i^{(\ga,1)}$.
Notice that for each ordinal $\gb<\ga$ and each $g\in G_i^{\gb}\sminus A_i^{\gb},$ 
we have $g\in G_i^{(\ga,1)}\sminus A_i^{(\ga,1)}$.
Indeed else take the minimal $\gc<\ga$ such that $g\in A_i^{\gc}$.  
Then, by definition, $\gc$ is not a limit ordinal,
and $g\in G_i^{\gc-1}\sminus A_i^{\gc-1}$.
So $g\in A_i^{\gc}\cap G_i^{\gc-1}=A_i^{\gc-1},$ 
a contradiction.  This means that $A_i^{(\ga,1)}\cap G_i^{\gb}=A_i^{\gb},$
for all ordinals $\gb<\ga$.

Suppose  now that $g^{-1}ag=b$ with $g\in G_i^{(\ga,1)}\sminus A_i^{(\ga,1)}$ and
$a, b\in A_i^{(\ga,1)}$. Then, by the previous paragraph, there exists $\gb<\ga$ 
so that $a,b\in A_i^{\gb}$
and $g\in G_i^{\gb}\sminus A_i^{\gb}$ and then we get a contradiction
to the malnormality of $A_i^{\gb}$ in $G_i^{\gb}$.  Clearly
$A_i^{(\ga,1)}$ contains no involutions.  Next if there exists $f\in A_i^{(\ga,1)}$
such that $A_i^{(\ga,1)}tf=A_i^{(\ga,1)}u_{\ga}$ then we let 
$G_i^{\ga}=G_i^{(\ga,1)},\ A_i^{\ga}=A_i^{(\ga,1)}$ and $f_{v_{\ga}}=f$.
Else we construct $G_i^{\ga},\ A_i^{\ga}$ and $f_{v_{\ga}}$ from 
$G_i^{(\ga,1)},\ A_i^{(\ga,1)}$ using Theorem \ref{thm main} with
$u=1$ and $v=v_{\ga}$ (just as in the construction above in the case
of a non-limit ordinal).
Again we see that (i)--(v) hold.

Finally put 
\[
G_{i+1}=\bigcup_{\alpha<\rho} G_i^\alpha,\quad A_{i+1}=\bigcup_{\alpha<\rho} A_i^\alpha,
\]
\begin{center}

\end{center}
and set 
\[
\calg=\bigcup_{i<\omega}G_i,\quad \cala=\bigcup_{i<\omega}A_i\quad\text{and}\quad X= \cala\backslash \calg.
\]
As in the construction of $G_i^{(\ga,1)},\ A_i^{(\ga,1)}$
in the case where $\ga$ is a limit ordinal, we see that $\cala$ is malnormal in $\calg$
and that $\cala\cap G_i=A_i,$ for each $i<\omega$.
To see that the action of $G$ on $X$ is $2$-transitive
just note that any $v\in \calg\sminus \cala$ is contained in some $G_i$
so that there is some $f_v\in A_{i+1}\subseteq\cala$ with $A_{i+1} tf_v = A_{i+1}v$.
Since $A_{i+1}\le \cala$ we see that $\cala tf_v=\cala v$ as required.
Since $\cala$ is malnormal in $\calg$ the action of $\calg$ on $X$
is sharply $2$-transitive.  By construction, $\cala$ contains no involutions.

Finally, as is well known, if $\calg$ contains a non-trivial abelian
normal subgroup, then necessarily all involutions in $\calg$
commute with each other (see, e.g., \cite[Remark 4.4]{GMS}).
But, by our construction, this is not the case in $\calg$.
Indeed, if $G_1=G_0*\lan f_1\ran$ is a free product, then $t$ does not
commute with $f_1^{-1}tf_1$.  
Suppose that $G_1=\lan G, f\mid f^{-1}tf=v\ran$ is
an HNN extension. Let $s\in G$ be an involution distinct from $t$
(notice that $t$ is not in the center of $G$ since $A$ is malnormal in $G,$
so such $s$ exists).
Then $sf^{-1}sf$ and $f^{-1}sfs$ are in canonical form, so they are distinct,
and the involutions $s$ and $f^{-1}sf$ do not commute\footnotemark.
\footnotetext{Note that we could start with a group $G_0$  
which already contains an involution that does not commute with $t$.  Then
it would immediately follow that $\calg$ does not split. We thank Uri Bader
for pointing this out.}
This completes the proof of Theorem \ref{thm s2t in char 2}.
\appendix

\section{Some background and a permutation group theoretic point of view}\label{app A} 

Recall that a permutation group $G$ on a set $X$ is {\it regular} if it
is transitive and no non-trivial element of $G$ fixes a point.  $G$ is a {\it Frobenius group} on $X,$
if $G$ is transitive on $X,$ no non-trivial element in $G$ fixes more than one point,
and some non-trivial elements of $G$ fix a point.  $G$ is  
{\it sharply $2$-transitive} if $G$ is transitive on $X,$ and for any two ordered pairs $(x_1, x_2),\ (x_1', x_2')\in X\times X$
of distinct points in $X,$ there exists  a unique element $g\in G$ such that $x_ig=x_i',\ i=1, 2$.

\begin{remarks}
Let $G$ be a group and let $A$ be a subgroup of $G$.  Let $X:=A\backslash G$
be the set of right cosets of $A$ in $G$.  Then the following are equivalent
\begin{enumerate}
\item
$A$ is malnormal in $G$.

\item
Either
\begin{enumerate}
\item
$A=1,$ and $G$ is regular on $X,$ or

\item
$G$ is a Frobenius group on $X$ (so $A\ne 1$).
\end{enumerate}
\end{enumerate}

\end{remarks}

If a sharply $2$-transitive group $G$ on $X$ contains a non-trivial normal abelian subgroup
$B,$ then $B$ is necessarily regular on $X$ and $G=HB$ with $H\cap B=1,$  where $H$ is the stabilizer
in $G$ of some point in $X$.  In this case we say that $G$ {\it splits,}
otherwise we say that $G$ is {\it non-split}.

The primary example of sharply $2$-transitive groups are the {\it $1$-dimensional affine groups}.
Given a field $F,$ the $1$-dimensional affine group over $F$ is the group 
$G:=\{x\mapsto ax+b\mid a, b\in F,\, a\ne 0\}$ of functions on $X=F$.
So $G$ is Frobenius on $X$.  

If $G$ is a $1$-dimensional affine group over $F$, then  $G$ splits.  Indeed
if we let $B=\{x\mapsto x+b\mid b\in F\}$
and $H=\{ax\mid a\in F,\,\, a\ne 0\},$ the stabilizer of $0$ in $G,$ then $B$
is an abelian normal subgroup of $G$ and $G=B H$.  In fact in \S 6 of \cite{K}
it is shown that sharply $2$-transitive groups can
be completely characterized by means of ``one-dimensional affine'' transformations
$x\mapsto ax+b$ on an algebraic structure called a {\it near-domain}
defined in \cite[Definition, p.~21]{K}.  Further, the notion of a {\it near-field}
is defined below the Definition in p.~21 of \cite{K}. And in \cite[Thm.~7.1, p.~25]{K}
it is shown that the assertion that every sharply $2$-transitive group splits is equivalent to the assertion
that {\it every near-domain is a near field} 
(see also \cite[subsection 20.7, p.~382]{Hall}, \cite[chapter 3]{SSS}).

However, for an infinite sharply $2$-transitive group $G$ it was a long standing problem 
whether or not $G$ splits.
It is known that a sharply $2$-transitive group splits in the following cases:
\begin{itemize}
\item
$G$ is locally compact  connected \cite{Ti};

\item
$G$ is locally finite \cite{W};

\item
$G$ is  definable in an o-minimal structure \cite{T2};

\item
$G$ is linear (with certain additional restrictions) \cite{GlGu};

\item
$G$ is locally linear (with some additional restrictions) \cite{GMS};
\item
Further splitting results can be found in \cite{BN} and \cite{SSS}.
\end{itemize}
 
To state some additional splitting results we need to introduce some more definitions.
So let $G$ be an infinite sharply $2$-transitive group on a set $X$.  
Then $G$ contains ``many'' involutions.  Let $I\subset G$ be the set of involutions in $G$.
Then $I$ is a conjugacy class in $G$.  If $i\in I$ has no fixed points in $X$ we say that 
$G$ is of characteristic $2$ and we write $\charc(G)=2$.  Otherwise each $i\in I$ fixes
a unique point.  In this case the set of all products of distinct involutions: $I^2\sminus\{1\}$
form a conjugacy class in $G,$ and a nontrivial power of an element in $I^2\sminus\{1\}$ 
belongs to $I^2\sminus\{1\}$.  It follows that the elements in $I^2\sminus\{1\}$ either
have an odd prime order $p,$ or are of infinite order.  In the former case
we say that the characteristic of $G$ is $p$ and in the latter case we say that the characteristic of $G$ is $0$.
Hence we have the following definition.
\begin{Def}\label{def char}
Let $G$ be a sharply $2$-transitive group on a set $X,$
and let $I$ be the set of involutions in $G$.  Let $I^2=\{ts\mid t, s\in I\}$.
We define the {\it characteristic} of $G,$ denoted $\charc(G)$ as follows:
\begin{itemize}
\item[(char 2)]  $\charc(G)=2$ if $i\in I$ has no fixed point in $X;$ 

\item[(char 0)]  $\charc(G)=0$ if each $g\in I^2\sminus\{1\}$ is of infinite order;

\item[(char p)]  $\charc(G)=p,$ where $p$ is an odd prime, if the order of each 
$g\in I^2\sminus\{1\}$ is $p.$
\end{itemize}
\end{Def}

\begin{itemize}
\item
In \cite[Thm.~9.5, p.~42]{K} and in \cite{Tu} it was shown that if $\charc(G)=3,$ then $G$ splits.

\item
In \cite{M} it was shown that if the exponent of the point stabilizer is $3$ or $6,$ then $G$ splits. 
\end{itemize}

Using the above terminology, we can now rephrase Theorem  \ref{thm s2t in char 2} as follows.
\begin{thm}
Every Frobenius or regular permutation group which is \texttt{not}
sharply $2$-transitive, and  whose involutions do not have any fixed point,
has a non-split sharply $2$-transitive extension of characteristic $2$.
\end{thm}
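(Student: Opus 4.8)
The plan is to recognize this statement as nothing more than the permutation-group dictionary of Theorem \ref{thm s2t in char 2}, and to deduce it directly from that theorem. The translation between the two languages is supplied by the Remarks and Definition \ref{def char} of Appendix \ref{app A}: a transitive action is regular or Frobenius precisely when a point stabilizer is malnormal, involutions are fixed-point-free precisely when the stabilizer contains no involutions, and ``splitting'' is by definition the presence of a non-trivial abelian normal subgroup.

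First I would set up the group-theoretic data. Given a regular or Frobenius group $G$ on $X$, transitivity lets me pick a point $x_0\in X$ and identify $X$ with the coset space $A\backslash G$, where $A:=G_{x_0}$ is its stabilizer (with $A=1$ in the regular case). By the equivalence recorded in Appendix \ref{app A}, the hypothesis that $G$ is regular or Frobenius is exactly the statement that $A$ is malnormal in $G$: the distinct point stabilizers are the distinct conjugates of $A$, and they meet trivially iff no non-trivial element fixes two points. Next, since any involution fixing a point would lie in some conjugate of $A$, and conjugation carries involutions to involutions, the assumption that the involutions of $G$ have no fixed point forces $A$ to contain no involutions. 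Finally, ``$G$ is not sharply $2$-transitive on $X=A\backslash G$'' is verbatim the remaining hypothesis of Theorem \ref{thm s2t in char 2}.

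With these translations in hand I would simply invoke Theorem \ref{thm s2t in char 2}, obtaining $\calg\ge G$ and a malnormal $\cala\le\calg$ satisfying its conclusions (1)--(4). Reading these back through the dictionary: conclusion (2) says $\calg$ acts sharply $2$-transitively on $X':=\cala\backslash\calg$; conclusion (3) (equivalently, that $\cala$ has no involutions, so by the same conjugacy argument as above no involution of $\calg$ fixes a point of $X'$) says this action has characteristic $2$ in the sense of Definition \ref{def char}; and conclusion (4) says $\calg$ has no non-trivial abelian normal subgroup, i.e.\ it is non-split.

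The one point that needs genuine verification---and the only place where the argument could go wrong---is that $\calg$ really is an \emph{extension of the permutation group} $G$, not merely an abstract overgroup. Here I would use conclusion (1), $\cala\cap G=A$: the map $Ag\mapsto\cala g$ (for $g\in G$) is $G$-equivariant, and it is injective precisely because $\cala g=\cala g'$ with $g,g'\in G$ forces $g'g^{-1}\in\cala\cap G=A$, that is $Ag=Ag'$. Thus $X=A\backslash G$ embeds $G$-equivariantly into $X'=\cala\backslash\calg$, so the original action of $G$ on $X$ is the restriction of the sharply $2$-transitive action of $\calg$, which completes the argument. Everything beyond this injectivity check is a mechanical unwinding of the definitions in Appendix \ref{app A}.
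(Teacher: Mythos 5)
Your proposal is correct and is exactly the paper's intended argument: the paper presents this statement as a direct rephrasing of Theorem \ref{thm s2t in char 2}, using the dictionary from the Remarks and Definition \ref{def char} of Appendix \ref{app A} (regular/Frobenius $\llr$ malnormal point stabilizer, fixed-point-free involutions $\llr$ no involutions in the stabilizer, non-split $\llr$ no non-trivial abelian normal subgroup). Your additional check that $Ag\mapsto\cala g$ gives a $G$-equivariant embedding of $X$ into $\cala\backslash\calg$, via $\cala\cap G=A$, is precisely the point the paper's notion of ``extension'' relies on, so nothing is missing.
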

\noindent
Here, by an ``extension'' we mean an extension of both the given
set and the given permutation group.
\medskip

\subsection*{Acknowledgment.}  
We would like to thank  Martina Pfeifer
for meticulously and efficiently preparing
the figures of this paper.

\end{document}